\setlist[enumerate]{parsep=0pt}
\newtheorem{theorem}{Theorem}
\theoremstyle{definition}
\newtheorem{definition}[theorem]{Definition}
\theoremstyle{theorem}
\newtheorem{lemma}[theorem]{Lemma}
\newtheorem{prop}[theorem]{Proposition}
\newtheorem{cor}[theorem]{Corollary}
\newtheorem{fact}[theorem]{Fact}
\newtheorem{remark}[theorem]{Remark}
\crefname{theorem}{Theorem}{Theorems}
\crefname{lemma}{Lemma}{Lemmas}
\crefname{prop}{Proposition}{Propositions}
\crefname{fact}{Fact}{Facts}
\crefname{remark}{Remark}{Remarks}
\crefname{cor}{Corollary}{Corollaries}
\newcommand{\theoremprefix}{}
\let\thetheoremsaved\thetheorem
\renewcommand{\thetheorem}{\theoremprefix\thetheoremsaved}
\patchcmd{\@startsection}{\par}{\renewcommand{\theoremprefix}{\csname the#1\endcsname.}}{}{}
\begin{document}

\clearpage
\pagenumbering{arabic}

\def\tp{\mbox{\rm tp}}
\def\qftp{\mbox{\rm qftp}}
\def\cb{\mbox{\rm cb}}
\def\wcb{\mbox{\rm wcb}}
\def\Diag{\mbox{\rm Diag}}
\def\trdeg{\mbox{\rm trdeg}}
\def\Gal{\mbox{\rm Gal}}
\def\Lin{\mbox{\rm Lin}}

\def\restriction#1#2{\mathchoice
              {\setbox1\hbox{${\displaystyle #1}_{\scriptstyle #2}$}
              \restrictionaux{#1}{#2}}
              {\setbox1\hbox{${\textstyle #1}_{\scriptstyle #2}$}
              \restrictionaux{#1}{#2}}
              {\setbox1\hbox{${\scriptstyle #1}_{\scriptscriptstyle #2}$}
              \restrictionaux{#1}{#2}}
              {\setbox1\hbox{${\scriptscriptstyle #1}_{\scriptscriptstyle #2}$}
              \restrictionaux{#1}{#2}}}
\def\restrictionaux#1#2{{#1\,\smash{\vrule height .8\ht1 depth .85\dp1}}_{\,#2}} 

\newcommand{\forkindep}[1][]{%
  \mathrel{
    \mathop{
      \vcenter{
        \hbox{\oalign{\noalign{\kern-.3ex}\hfil$\vert$\hfil\cr
              \noalign{\kern-.7ex}
              $\smile$\cr\noalign{\kern-.3ex}}}
      }
    }\displaylimits_{#1}
  }
}
\newpage

\begin{center}

\large \MakeUppercase{A note on weakly stable Kim-forking}

\vspace{5mm}

    \large Yvon \textsc{Bossut}
\end{center}
\vspace{10mm}

Abstract : We define weak stable Kim-forking, a notion that generalizes stable forking to the context of NSOP$_1$ theories. We adapt some of the known results on stable forking to this context.

\section{ Introduction}

NSOP$_1$ theories have recently been studied as a generalization of simple theories. The notion of Kim-forking plays a similar role in NSOP$_1$ theories than forking in simple theories. Kim and Pillay \cite{kim1997simple} have shown that forking independence in simple theories is characterized by some of its properties (see \cite{wagner2000simple} for example). Chernikov and Ramsey have shown similar results for Kim-independence in NSOP$_1$ theories, \cite[Theorem 5.8]{chernikov2016model}.

\vspace{10pt}
Here Kim-forking is considered at the level of formulas. The stable forking conjecture is the conjecture that in a simple theory, if a complete type forks over a set of parameter, then there is a stable formula inside of this type which forks. In short, that forking is witnessed by stable formulas. A generalization of the notion of stable forking to the context of NSOP$_1$ theories, namely weak stable Kim-forking, is given here. We try to connect this notion to the results known about stable forking, which can be found in \cite{casanovas2018stable}. For example it is known that is some theory $T$ has stable forking then the imaginary extension $T^{eq}$ also does. However I could not prove the equivalent result here for some reasons discussed in Section 5.

\vspace{10pt}
We can also cite on this subject \cite{baldwin2024simple} a recent article by Baldwin, Freitag and Mutchnik in which they demonstrate, among other things, that Kim-forking in an NSOP$_1$ theory is always witnessed by a simple formula, that is, one that does not have the tree property.

\section{Preliminaries : Independence relations and local types}

\subsection{Without the existence axiom}

\vspace{10pt}
Let $\mathbb{M}\models T$ be a monster model of some complete theory. We introduce the relations of independence that will appear here.

\begin{definition} Let $b_{0},e$ be tuples of $\mathbb{M}$ and $\varphi(x,b_{0})$ be a formula.\begin{enumerate}
\item $\varphi(x,b_{0})$ \emph{divides over $e$} if there is an $e$-indiscernible sequence $I=(b_{i}$ : $i < \omega)$ such that :

\vspace{-2pt}
\begin{center}
$ \lbrace \varphi (x,b_{i})$ : $i<\omega \rbrace$ is inconsistent.
\end{center}
\vspace{-2pt}

In that case we say that $\varphi(x,b_{0})$ divides over $e$ with respect to $I$.
\item A partial type $p (x,b)$ \emph{forks over $e$} if it implies a finite disjunction of formulas each of which divides over $e$.
\item Let $a \in \mathbb{M}$. We write $a\forkindep^{f}_{e}b$ to denote the assertion that $\tp(a/eb_{0})$ does not fork over $e$ and $a\forkindep^{d}_{e}b_{0}$ to denote the assertion that $\tp(a/eb_{0})$ does not divide over $e$.
\item A sequence of tuples $(a_{i}$ : $i\in I)$ is called \emph{$e$-Morley} if it is $e$-indiscernible and if $a_{i}\forkindep^{f}_{e}a_{<i} $ for every $i \in I $.
\end{enumerate}
\end{definition}

\begin{definition} A set $e\subseteq \mathbb{M}$ is an \emph{extension basis} if $a\forkindep^{f}_{e}e$ for every $a\in \mathbb{M}$. A theory $T$ has \emph{existence} if every set is an extension basis. This is equivalent to saying that for every $e\in \mathbb{M}$ and every $p(x)\in S(e)$ there is an $e$-Morley sequence in $p$.
\end{definition}

\begin{remark} Every model is an extension basis, simple theories satisfy existence, and it was a conjecture that NSOP$_1$ theories does as well before a counterexample was found by Mutchnik in \cite{mutchnik2024mathrm}.
\end{remark}

\begin{definition} Consider a small model $M \prec \mathbb{M}$. A sequence of tuples $(a_{i}$ : $i\in I)$ is called \emph{Coheir Morley over $M$} if it is $M$-indiscernible and if $\tp(a_{i}/Ma_{<i}$ is finitely satisfiable in $M$ for every $i \in I $. Such a sequence is always $M$-Morley.
\end{definition}

\vspace{10pt}
There are a lot of simplicity-like results characterizing NSOP$_1$-theories in terms of properties of Kim-forking, for now let us set the following definition :

\begin{definition} Let $T$ be a complete theory. $T$ is NSOP$_1$ if and only if Kim-independence is symmetric over models, meaning that $a\forkindep^{K}_{M}b$ iff $b\forkindep^{K}_{M}a$ for every $a,b\in \mathbb{M}$ and $M\prec \mathbb{M}$.\end{definition}

\vspace{10pt}
Without the hypothesis of existence we define Kim-forking over models, this notion has been studied in \cite{kaplan2020kim} and \cite{kaplan2019local} for example.

\begin{definition} Consider a small model $M \prec \mathbb{M}$. Let $b_{0}\in \mathbb{M}$ and let $\varphi(x,b_{0})$ be a partial type over $Mb$.\begin{enumerate}

\item $\varphi(x, b_{0})$ \emph{Kim-divides over $M$} if there is an $M$-Coheir-Morley sequence $(b_{i}$ : $i<\omega )$ such that :

\vspace{-2pt}
\begin{center}
$ \lbrace \varphi (x,b_{i})$ : $i<\omega \rbrace$ is inconsistent.
\end{center}
\vspace{-2pt}
We will say that $\varphi(x, b_{0})$ Kim-divides over $M$ with respect to $(b_{i}$ : $i<\omega)$.

\item $p(x,b)$ \emph{Kim-forks over $M$} if it implies a finite disjunction of formulas each of which Kim-divides over $M$.

\item Let $a \in \mathbb{M}$. We write $a\forkindep^{K}_{M}b_{0}$ to denote the assertion that $\tp(a/Mb)$ does not Kim-fork over $M$ and $a\forkindep^{Kd}_{M}b$ to denote the assertion that $\tp(a/Mb)$ does not Kim-divide over $M$.
\end{enumerate}
\end{definition}

\vspace{10pt}
This is the notion of Kim-forking that we will use in Section 3.

\subsection{Assuming existence}

If we assume that the theory we are working in satisfies existence we can define Kim-forking over arbitrary sets :

\begin{definition} Consider a tuple $e \in \mathbb{M}$. Let $b_{0}\in \mathbb{M}$ and $\varphi(x,b_{0})$ be a partial type over $eb$. $\varphi(x, b_{0})$ \emph{Kim-divides over $e$} if there is an $e$-Morley sequence $(b_{i}$ : $i<\omega )$ such that $\lbrace \varphi(x,b_{i})$ : $i<\omega \rbrace$ is inconsistent. We will say that $\varphi(x, b_{0})$ Kim-divides over $e$ with respect to $(b_{i}$ : $i<\omega)$. Kim-forking over $e$ is defined similarly. 
\end{definition}

\vspace{10pt}
This is the notion of Kim-forking that we will use in Section 4, its properties in NSOP$_1$ theories have been studied in \cite{kaplan2021transitivity} and \cite{dobrowolski2022independence} for example. To give some example the following notion will also appear in this section :

\begin{definition} We define \emph{algebraic independence} as the relation $A\forkindep^{a}_{C}B:=  acl(AC)\cap acl(BC)=acl(C)$.\end{definition}

In this section we consider Kim-forking at the level of formulas. The stable forking conjecture is the conjecture that in a simple theory, if a complete type forks over a set of parameter, then there is a stable formula inside of this type which forks. In short, that forking is witnessed by stable formulas.

\vspace{10pt}
A generalization of the notion of stable forking to the context of NSOP$_1$ theories, namely weak stable Kim-forking, is given here. We connect this notion to some results known about stable forking which can be found in \cite{casanovas2018stable}. It is known that if a theory $T$ has stable forking then the imaginary extension $T^{eq}$ also does. However I could not prove the equivalent result here for some reasons discussed in Section 5.

\subsection{Local types}

We recall some basic notions around local types. Let $\varphi(x,y)$ be a formula.

\begin{definition} A \emph{$\varphi$-formula over $A$} is a boolean combination of formulas of the form $\varphi(x,a)$ for $a\in A$. A \emph{generalized $\varphi$-formula over $A$} is a formula over $A$ that is equivalent to a boolean combination of $\varphi$-formulas, possibly with parameters outside of $A$. 

\vspace{10pt}
A \emph{complete $\varphi$-type over $A$} (resp. \emph{complete generalized $\varphi$-type over $A$}) $p$ is a consistent set of $\varphi$-formulas (resp. generalized $\varphi$-formulas) over $A$ such that given any $\psi(x)$ $\varphi$-formula (resp. generalized $\varphi$-formula) over $A$ either $\psi(x)\in p$ or $\neg\psi(x) \in p$.

\vspace{10pt}
The set of complete $\varphi$-types (resp. generalized $\varphi$-types) over $A$ is written $S_{\varphi}(A)$ (resp. $S_{\varphi^{*}}(A)$) and the $\varphi$-type (resp. generalized $\varphi$-type) of $a$ over $A$ is written $\tp_{\varphi}(a/A)$ (resp. $\tp_{\varphi^{*}}(a/A)$).
\end{definition}

\begin{definition} A $\varphi$-type $p$ is said to be definable over some set of parameters $A$ if there is a formula $d_{p}x\varphi(x,y)$ with parameters in $A$ such that $\varphi(x,b)\in p$ if and only if $\models d_{p}x\varphi(x,b)$. 
\end{definition}

\begin{fact}\cite[Chapter 2]{kim1997simple}\label{factmodstable} If $\varphi(x,y,e)$ is a stable formula and $M$ is a model such that $e\in M$, then every $\varphi(x,y,e)$-type $p$ over $M$ is definable over $M$ and has a unique non-forking extension, which has the same definition. We write $d_{p}x\varphi(x,y,e)$ its definition and $Cb(d_{p}x\varphi(x,y,e))$ the canonical parameter of this definition.
\end{fact}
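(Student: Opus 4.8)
This is a classical fact of local stability theory (Shelah; see also Harnik--Harrington and the local-stability chapters of Pillay's or of Tent--Ziegler's books), and the plan is to reconstruct the usual argument. Throughout, since $e\in M$ we treat $\varphi(x,y,e)$ as a formula over the base. The one combinatorial input is that $\varphi(x,y,e)$ being stable is equivalent, by compactness, to a uniform finite bound $N$ on the length of any $\varphi(x,y,e)$-order, equivalently to the finiteness of the local $\varphi$-rank $R^{\varphi}$ (Shelah's $2$-rank relative to $\varphi$): an ill-founded rank builds, via K\"onig's lemma, compactness and Ramsey, a binary tree of instances of $\varphi$ witnessing the order property. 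I will freely use the standard bookkeeping properties of $R^{\varphi}$: additivity under Boolean splitting ($R^{\varphi}(\pi)=\max(R^{\varphi}(\pi\wedge\chi),R^{\varphi}(\pi\wedge\neg\chi))$ for a $\varphi$-formula $\chi$), finiteness of the $\varphi$-degree of any $\varphi$-formula when $\varphi$ is stable, and the fact that for fixed $k$ the condition ``$R^{\varphi}$ of a given Boolean combination of instances $\varphi(x,b_1,e),\dots,\varphi(x,b_n,e)$ is $\ge k$'' is uniformly first-order in $b_1,\dots,b_n$ (it asserts the existence of a depth-$k$ witnessing tree together with the first-order consistency of finitely many $\varphi$-instances along its branches).

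Given $p\in S_{\varphi}(M)$, let $m:=R^{\varphi}(p)=\min\{R^{\varphi}(\psi):\psi$ a finite conjunction of members of $p\}\le N$, and pick a finite conjunction $\theta(x)\in p$ over $M$ with $R^{\varphi}(\theta)=m$. If $\theta$ has $\varphi$-degree $>1$, then ``there is $c$ with $R^{\varphi}(\theta\wedge\varphi(x,c,e))=R^{\varphi}(\theta\wedge\neg\varphi(x,c,e))=m$'' is a first-order statement over $M$ which holds in $\mathbb{M}$, hence holds in $M$ since $M\prec\mathbb{M}$; replacing $\theta$ by whichever of $\theta\wedge\varphi(x,c,e)^{\pm}$ lies in $p$ strictly decreases the $\varphi$-degree while keeping rank $m$ and keeping the formula over $M$. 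This process terminates, so I may assume $\theta\in p$ is over $M$, of rank $m$, and rank-$m$ irreducible. Then for $b\in M$ one has $\varphi(x,b,e)\in p$ iff $R^{\varphi}(\theta\wedge\varphi(x,b,e))=m$ (if $\varphi(x,b,e)\in p$ then $\theta\wedge\varphi(x,b,e)\in p$ has rank $\ge m$ hence $=m$; if $\neg\varphi(x,b,e)\in p$ then $\theta\wedge\neg\varphi(x,b,e)$ is a rank-$m$ piece of $\theta$, so by irreducibility $\theta\wedge\varphi(x,b,e)$ has rank $<m$). Thus $p$ is defined over $M$ by $d_{p}y\,\varphi(x,y,e):\equiv$ ``$R^{\varphi}(\theta(x)\wedge\varphi(x,y,e))\ge m$'', a formula whose only parameters are those of $\theta$ and $e$, all in $M$.

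Now let $\bar{p}$ be the global $\varphi$-type cut out by the same schema $d_{p}$ (i.e. $\varphi(x,b,e)\in\bar p$ iff $R^{\varphi}(\theta\wedge\varphi(x,b,e))=m$, for all $b$). Using additivity and irreducibility of $\theta$ one checks that for each $b$ exactly one of the two signs keeps the rank at $m$, so $\bar{p}$ is complete, and that finite conjunctions stay at rank $m$, so $\bar{p}$ is consistent (alternatively: since $d_{p}$ is over $M\prec\mathbb{M}$, every finite fragment of $\bar p$ is already realized in $M$). Clearly $\bar{p}\restriction M=p$, and $R^{\varphi}(\bar{p})=m=R^{\varphi}(p)$, so $\bar{p}$ is a non-forking extension of $p$. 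For uniqueness, any global $\varphi$-type $q\supseteq p$ with $R^{\varphi}(q)=m$ contains $\theta$; for each $b$ the sign of $\varphi(x,b,e)$ that lies in $q$ must keep the rank at $m$ (since $m$ is the minimum over $q$), and by irreducibility that sign is forced; hence $q=\bar{p}$. So the non-forking extension is unique and shares the definition $d_{p}$ with $p$, and $Cb(d_{p}x\,\varphi(x,y,e))$ is simply the canonical parameter of the $M$-formula $d_{p}y\,\varphi(x,y,e)$, which lies in $M^{eq}$.

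I expect the only real work to be in the $\varphi$-rank/degree bookkeeping invoked in the first paragraph (additivity under splitting, finiteness of the $\varphi$-degree for stable $\varphi$, and definability of the rank inequalities in the parameters), together with the single point at which the hypothesis ``$M$ is a model'' is genuinely used: the reflection of the splitting parameter $c$ into $M$ via $M\prec\mathbb{M}$, which is exactly the statement that $p$ has $\varphi$-degree one over $M$, i.e. stationarity over a model. Everything else above is formal manipulation.
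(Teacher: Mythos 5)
The paper offers no proof of this statement: it is imported verbatim as a Fact from the cited source (the classical development of local stability for a single stable formula), so there is nothing in the paper to compare your argument against line by line. Your reconstruction is the standard local-rank proof (finite $R^{\varphi}$, degree reduction over a model via elementarity, definability of the rank conditions), and it is essentially correct as a proof sketch; in particular you correctly isolate the one place where ``$M$ is a model'' is used, namely reflecting the splitting parameter $c$ into $M$ to drive the $\varphi$-degree down to $1$.

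There is one genuine gap worth naming: you use, in both directions and without comment, the equivalence between ``$q$ is a non-forking extension of $p$'' and ``$R^{\varphi}(q)=R^{\varphi}(p)$''. It does not appear in your list of ``standard bookkeeping properties of $R^{\varphi}$'' that you permit yourself, yet it is exactly where the words \emph{non-forking extension} in the statement acquire content. For existence you need that the rank-preserving defined extension $\bar p$ does not fork over $M$ (e.g.\ via $\mathrm{Aut}(\mathbb{M}/M)$-invariance of $\bar p$, since its definition is over $M$, plus the passage from non-dividing to non-forking for partial types); for uniqueness you need the converse, that any extension of $p$ in which the rank drops must fork over $M$, which is usually proved by extracting an $M$-indiscernible witness from a rank-$<m$ formula. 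Without this equivalence your argument only establishes that there is a unique \emph{rank-preserving} global $\varphi$-type extending $p$. A smaller point: the parenthetical claim that every finite fragment of $\bar p$ is ``already realized in $M$'' because $d_p$ is over $M$ is overstated --- elementarity lets you reflect a putative inconsistency of such a fragment into $M$ and thereby prove consistency, but it does not by itself produce a realization inside $M$; your primary consistency argument via the rank staying at $m$ is the one that works.
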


The previous result holds more generally for types over algebraically closed sets in $T^{eq}$.

\begin{fact}\label{defalgclos} \cite[Proposition 6.13]{casanovas2011simple}: Let $\varphi(x,y)$ be a stable formula and $E=acl^{eq}(E) \subseteq B$. Then any generalized $\varphi$-type $p_{\varphi}\in S_{\varphi^{*}}(E)$ is definable over $E$, and it has a unique extension $q\in S_{\varphi^{*}}(B)$ which is also definable over $E$. The formula $d_{p}x\varphi(x,y)$ defining $p$ then admits a canonical parameter in $E$, which we write $Cb(d_{p}x\varphi(x,y))$.
\end{fact}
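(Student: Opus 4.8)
The plan is to bootstrap from the model case (\cref{factmodstable}) together with stationarity over algebraically closed sets — the finite equivalence relation theorem — for the stable formula $\varphi$. Throughout I would use the basic fact that, since $\varphi$ is stable, every boolean combination of instances $\varphi(x,b_{i})$ is itself a stable formula (in $x$ and the displayed parameters), so the local stability machinery applies to generalized $\varphi$-types exactly as it does to $\varphi$-types. First I would extend $p_{\varphi}$ to a complete generalized $\varphi$-type $\tilde p$ over a small model $M$ with $E\subseteq M$, chosen so that $\tilde p$ is a non-forking extension of $p_{\varphi}$ (this exists by the extension property for non-forking of the relevant stable formulas). By \cref{factmodstable}, $\tilde p$ is definable over $M$ by a schema $d_{p}x\,\varphi(x,y)$, and $\tilde p$ has a unique global non-forking extension $q\in S_{\varphi^{*}}(\mathbb{M})$ with the same definition; write $c=Cb(d_{p}x\,\varphi(x,y))$.

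The heart of the argument is to show $c\in E$, and here I would use that, because $E=acl^{eq}(E)$, the type $p_{\varphi}$ is \emph{stationary}: it has a unique global non-forking extension, namely $q$. (This is the finite equivalence relation theorem applied to the stable boolean combinations of instances of $\varphi$, and it is the one genuinely non-formal input.) Granting this, let $\sigma\in\mathrm{Aut}(\mathbb{M}/E)$. Then $\sigma(q)$ is again a global generalized $\varphi$-type extending $\sigma(p_{\varphi})=p_{\varphi}$ which does not fork over $E$ (non-forking being $\mathrm{Aut}(\mathbb{M}/E)$-invariant), so by stationarity $\sigma(q)=q$. But $\sigma(q)$ is defined by $d_{p}x\,\varphi(x,y)$ with parameter $\sigma(c)$, so canonicity forces $\sigma(c)=c$. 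Hence $c$ is fixed by $\mathrm{Aut}(\mathbb{M}/E)$, i.e. $c\in dcl^{eq}(E)\subseteq acl^{eq}(E)=E$. Thus $q$ is definable over $E$; restricting, $p_{\varphi}$ is definable over $E$, and the restriction of $q$ to $B$ is an extension of $p_{\varphi}$ in $S_{\varphi^{*}}(B)$ definable over $E$, with canonical parameter $c\in E$ as required.

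For uniqueness of the $E$-definable extension over $B$: suppose $q'\in S_{\varphi^{*}}(B)$ also extends $p_{\varphi}$ and is definable over $E$ by a schema $d'x\,\varphi(x,y)$ over $E$. Let $\bar q'$ be the global generalized $\varphi$-type given by the same schema $d'$. Then $\bar q'$ is definable over $E$, hence does not fork over $E$, and it extends $p_{\varphi}$; by the stationarity invoked above, $\bar q'=q$, and therefore $q'$, being the restriction of $\bar q'$ to $B$, equals the restriction of $q$ to $B$. This also pins down the defining schema, and hence $Cb(d_{p}x\,\varphi(x,y))$, uniquely up to interdefinability.

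The main obstacle is exactly the stationarity step: everything else is bookkeeping that reduces cleanly to \cref{factmodstable} and the invariance of non-forking. What needs care is the finite equivalence relation theorem at the level of the stable formula $\varphi$ (equivalently, for the boolean combinations of its instances) and the verification that allowing parameters \emph{outside} $E$ in generalized $\varphi$-formulas does not cause trouble — this last point being harmless precisely because stability of $\varphi$ propagates to all such boolean combinations, so they are governed by a countable family of stable formulas to which the classical local theory applies verbatim.
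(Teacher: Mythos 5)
The paper does not prove this statement at all: it is imported as a black-box \emph{Fact}, cited to \cite[Proposition 6.13]{casanovas2011simple}, so there is no in-paper argument to compare yours against. Your sketch is the standard proof of that cited result, and its architecture is sound: reduce to the model case via \cref{factmodstable}, then use stationarity of complete generalized $\varphi$-types over $acl^{eq}$-closed sets (the local finite equivalence relation theorem) to show the canonical parameter is $\mathrm{Aut}(\mathbb{M}/E)$-invariant, hence in $dcl^{eq}(E)\subseteq E$. You correctly identify that stationarity is the one non-formal input, and you correctly note that it is the passage to \emph{generalized} $\varphi$-formulas (parameters allowed outside $E$, i.e.\ imaginary codes of $\varphi$-definable sets) that makes stationarity over $acl^{eq}(E)$ true at all; for plain $\varphi$-types over $acl^{eq}(E)$ it can fail.

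Two bookkeeping points deserve a sentence each if this were written out. First, to apply \cref{factmodstable} to $\tilde p$ you need that over a model $M$ a complete $\varphi$-type determines the complete generalized $\varphi$-type (so that definability and uniqueness of the non-forking extension transfer to $S_{\varphi^{*}}(M)$); this is standard but not free. Second, in the uniqueness step you pass from the schema $d'$ over $E$ to ``the global generalized $\varphi$-type given by the same schema'' — a priori a defining schema over a non-model need not be a good definition globally. The cleaner route is the one implicit in \cref{factdef}: an $E$-definable $q'\in S_{\varphi^{*}}(B)$ does not fork over $E$ (by indiscernibility the schema forces consistency along any $E$-indiscernible sequence), so any global non-forking extension of $q'$ is a non-forking extension of $p_{\varphi}$ and stationarity finishes. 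With those repairs your argument is complete and is, as far as one can tell, the same proof as in the cited source.
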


\section{Weak stable Kim-forking over models}

In this section we work in any complete theory $T$ in a language $\mathcal{L}$, all sets of parameters and elements are assumed to be in a monster model $\mathbb{M}$ of $T$. We show some basic results about what can and can not happen when Kim-dividing is witnessed by stable formulas, and isolate the notion of 'weak stable Kim-forking' as the correct generalization of stable forking in simple theories to the NSOP$_1$ context. In this section we only consider types over models and Kim-forking over models.

\begin{definition} A theory $T$ has \emph{stable Kim-forking over models} (resp. \emph{stable forking over models}) if whenever $a\centernot\forkindep^{K}_{M}B$ for $M\subseteq B$ and $M$ a model (resp. $a\centernot\forkindep^{f}_{M}B$) there is a stable formula $\varphi(x,y) \in \mathcal{L}$ and $b\in B$ such that $a\models \varphi(x,b)$ and $\varphi(x,b)$ Kim-forks (resp. forks) over $M$.
\end{definition}

\begin{definition} A theory $T$ has \emph{weak stable Kim-forking over models} (resp. \emph{weak stable forking over models}) if whenever $a\centernot\forkindep^{K}_{M}B$ (resp. $a\centernot\forkindep^{f}_{M}B$) for $M\subseteq B$ and $M$ a model there is a stable formula $\varphi(x,y,m) \in \mathcal{L}_{m}$ (i.e. the formula with fixed $m\in M$ is stable) and $b\in B$ such that $\models \varphi(a,b,m)$ and $\varphi(x,b,m)$ Kim-forks (resp. forks) over $M$.
\end{definition}

It is known that a theory with stable forking is simple \cite[Proposition 1.3]{casanovas2018stable}. We show that a theory with stable Kim-forking over models is simple and that a theory with weak stable forking over models is simple. We also show that a theory with weak stable Kim-forking over models is NSOP$_1$ and not necessarily simple.

\begin{lemma}\label{fstable} Let $M\subseteq N$ be models of a theory $T$ with weak stable forking over models. For any type $p=\tp(a/N)$,  $a\forkindep^{f}_{M}N$ if and only if for every stable formula $\varphi(x,y,m)$ with $m\in M$ the canonical parameter $Cb(d_{p}x\varphi(x,y,m))$ is in $M$.\end{lemma}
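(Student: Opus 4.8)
The proof plan is to establish the two directions separately, using the characterization of stable $\varphi$-types over models (Fact~\ref{factmodstable}) to translate the abstract non-forking statement into a statement about canonical parameters of $\varphi$-definitions. Throughout, write $p = \tp(a/N)$ and, for a stable formula $\varphi(x,y,m)$ with $m \in M$, write $p_\varphi = \tp_\varphi(a/N)$ for the associated $\varphi$-type over $N$; since $m \in M \subseteq N$, Fact~\ref{factmodstable} tells us $p_\varphi$ is definable over $N$ and has a unique non-forking extension to $\mathbb{M}$ with the same definition $d_{p}x\varphi(x,y,m)$, whose canonical parameter $Cb(d_{p}x\varphi(x,y,m))$ is well-defined.

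For the forward direction, assume $a \forkindep^{f}_{M} N$. Then for any stable $\varphi(x,y,m)$ with $m \in M$, the $\varphi$-type $p_\varphi$ of $a$ over $N$ does not fork over $M$: indeed, any formula in $p_\varphi$ is in $\tp(a/N)$, so it does not fork over $M$. By Fact~\ref{factmodstable} applied over $M$ (using that $m \in M$), the restriction $p_\varphi \restriction M$ is definable over $M$ and has a unique non-forking extension over $N$ (indeed over $\mathbb{M}$) with the same definition, and that extension must be $p_\varphi$ itself since $p_\varphi$ does not fork over $M$. Hence the definition $d_{p}x\varphi(x,y,m)$ of $p_\varphi$ coincides with the definition of $p_\varphi \restriction M$, which is over $M$; therefore $Cb(d_{p}x\varphi(x,y,m)) \in \mathrm{dcl}^{eq}(M) = M^{eq}$, and in the sense intended in the statement it lies in $M$.

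For the reverse direction, this is where weak stable forking over models is used, and I expect it to be the main obstacle. Suppose toward a contradiction that $a \centernot\forkindep^{f}_{M} N$. By the hypothesis that $T$ has weak stable forking over models (with $B = N$), there is a stable formula $\varphi(x,y,m) \in \mathcal{L}_m$ with $m \in M$ and $b \in N$ such that $\models \varphi(a,b,m)$ and $\varphi(x,b,m)$ forks over $M$. Thus $\varphi(x,b,m) \in p_\varphi$ but this formula forks over $M$, so $p_\varphi$ forks over $M$, and in particular $p_\varphi$ is \emph{not} the non-forking extension of $p_\varphi \restriction M$. By the uniqueness part of Fact~\ref{factmodstable} (over $M$), the non-forking extension of $p_\varphi \restriction M$ over $N$ has the same definition as $p_\varphi \restriction M$, namely one over $M$; since $p_\varphi$ is a different extension, its own definition $d_{p}x\varphi(x,y,m)$ cannot be equivalent to a formula over $M$, so $Cb(d_{p}x\varphi(x,y,m)) \notin M$. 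This contradicts the assumption, completing the proof.

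The step I anticipate needing the most care is making sure the notion of "canonical parameter lies in $M$" is handled correctly when $M$ is not necessarily algebraically closed in $\mathbb{M}^{eq}$: strictly speaking $Cb$ lives in $\mathbb{M}^{eq}$, and "$\in M$" should be read as "$\in \mathrm{dcl}^{eq}(M)$", i.e. the defining formula is equivalent to one with parameters from $M$. With that reading the argument above goes through cleanly; Fact~\ref{factmodstable} is exactly the tool giving definability-over-$M$ of the non-forking extension, and the translation in both directions is an application of the uniqueness of that non-forking extension. I would also remark that the forward direction does not actually use weak stable forking — only Fact~\ref{factmodstable} — and that the hypothesis is genuinely needed only for the reverse implication, which is worth stating since it clarifies which properties of $T$ are being consumed.
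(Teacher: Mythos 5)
Your forward direction is exactly the paper's: since $a\forkindep^{f}_{M}N$, the $\varphi(x,y,m)$-type of $a$ over $N$ is the unique non-forking extension of $\tp_{\varphi}(a/M)$, so by \cref{factmodstable} it has the same $M$-definition and hence the same canonical parameter. Your remark that this direction does not consume the weak stable forking hypothesis is also correct and matches the paper.

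In the reverse direction the overall shape is right (apply weak stable forking over models with $B=N$ to get a stable $\varphi(x,y,m)$ and $b\in N$ with $\models\varphi(a,b,m)$ and $\varphi(x,b,m)$ forking over $M$), but your final inference has a gap. From ``$p_{\varphi}$ is a forking extension of $\tp_{\varphi}(a/M)$, hence differs from the unique non-forking extension, whose definition is over $M$'' you conclude that the definition of $p_{\varphi}$ cannot be equivalent to a formula over $M$. Uniqueness of the \emph{non-forking} extension does not by itself exclude a second, forking extension that also happens to be $M$-definable; you need one further ingredient. Either (i) observe that two $\varphi$-types over $N$ extending the same $\varphi$-type over $M$ and both definable over $M$ must coincide, because their definitions agree on all tuples from $M$ and $M\prec N$ then forces them to agree on $N$; or (ii) run the argument the paper actually uses: if $Cb(d_{p}x\varphi(x,y,m))$ lay in $M$ (or $acl^{eq}(M)$), then along an $M$-indiscernible sequence $(n_{i})_{i<\omega}$ witnessing dividing of $\varphi(x,n_{0},m)$ one would have $\models d_{p}x\varphi(x,n_{i},m)$ for every $i$, so $\lbrace\varphi(x,n_{i},m) : i<\omega\rbrace$ would be contained in a non-forking (hence consistent) extension of $p$, contradicting its inconsistency. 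Either patch is a line or two, so the proof is easily repaired; but as written, the step you assert is precisely the content of the indiscernibility argument that carries this direction in the paper, and it does not follow from \cref{factmodstable} alone.
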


\begin{proof}
$[\implies]$ By \cref{factmodstable} we know that the $\varphi(x,y,m)$-type of $a$ over $M$ is definable over $M$ and admits a unique non-forking extension to $N$, which has the same definition over $M$, so in particular the same canonical parameter $Cb(d_{p}x\varphi(x,y,m))\in M$.

\vspace{10pt}
$[\impliedby]$ Conversely assume that $p$ forks over $M$ and that $M$ contains the canonical parameters. Then there is a stable formula $\varphi(x,y,m)$ with $m\in M$ and $n_{0}\in N$ such that $\varphi(x,n_{0},m) \in p$ and $\varphi(x,n_{0},m)$ forks over $M$.

\vspace{10pt}
So there is an $M$-indiscernible sequence $(n_{i})_{i<\omega}$ such that $\lbrace \varphi(x,n_{i},m)$ : $i<\omega \rbrace $ is inconsistent. Now since $Cb(d_{p}x\varphi(x,y,m))\in acl^{eq}(M)$ by indiscernibility $\models d_{p}x\varphi(x,n_{i},m)$ for all $i<\omega$, so $\lbrace \varphi(x,n_{i},m)$ : $i<\omega \rbrace$ is satisfied by any realization of a non-forking extension of $p$ to $N(n_{i})_{i<\omega}$, a contradiction.
\end{proof}

\begin{prop}\label{simpleici} Let $T$ be a theory with weak stable forking over models, then $T$ is simple.
\end{prop}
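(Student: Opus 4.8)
The plan is to deduce simplicity from the fact that forking in $T$ has local character on types over models: it suffices to show that for every finite tuple $a$ and every model $N\models T$ there is a submodel $M\preceq N$ with $|M|\le|T|$ and $a\forkindep^{f}_{M}N$, which by a standard reformulation of simplicity gives the result. Here \cref{fstable} is exactly what makes this tractable: for models $M\preceq N$ it re-expresses $a\forkindep^{f}_{M}N$ as the condition that $M$ contain every canonical parameter $Cb(d_{p}x\varphi(x,y,m))$ with $\varphi(x,y,m)$ stable and $m\in M$, where $p=\tp(a/N)$.

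So fix $a$ and $N$, and set $p=\tp(a/N)$. I would build an increasing elementary chain $(M_{i})_{i<\omega}$ of submodels of $N$ of size $\le|T|$: pick $M_{0}\preceq N$ arbitrary of size $\le|T|$ by L\"owenheim--Skolem, and, given $M_{i}$, pick $M_{i+1}\preceq N$ containing $M_{i}$ together with every canonical parameter $Cb(d_{p}x\varphi(x,y,m))$ for which $\varphi(x,y,z)\in\mathcal{L}$ and $m\in M_{i}$ satisfy that $\varphi(x,y,m)$ is stable. This is possible with $|M_{i+1}|\le|T|$, since there are at most $|T|$ formulas $\varphi$ and $|M_{i}|\le|T|$ parameters $m$ to consider, and by \cref{factmodstable} each $Cb(d_{p}x\varphi(x,y,m))$ is a finite tuple lying in $N$, so a further application of L\"owenheim--Skolem keeps the cardinality down. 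Set $M=\bigcup_{i<\omega}M_{i}$; this is an elementary submodel of $N$ with $|M|\le|T|$.

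To conclude, apply \cref{fstable} to the pair $M\preceq N$. If $\varphi(x,y,m)$ is stable with $m\in M$, then $m\in M_{i}$ for some $i<\omega$, so $Cb(d_{p}x\varphi(x,y,m))\in M_{i+1}\subseteq M$; hence by \cref{fstable}, $a\forkindep^{f}_{M}N$. The same construction for an arbitrary tuple $a$ only enlarges the bound on $|M|$ to $|T|+|a|$, so forking in $T$ has local character over models and therefore $T$ is simple.

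Once \cref{fstable} is available the argument is essentially bookkeeping, and the genuine inputs are the stability-theoretic facts it and \cref{factmodstable} rest on (definability of stable $\varphi$-types over models, canonical parameters living in the model, and the canonical-parameter criterion for non-forking over a submodel). The one place that calls for care is invoking, rather than reproving, the standard fact that local character of forking on types over models already forces $T$ to be simple; a minor, purely cosmetic point is whether the parameters $Cb(d_{p}x\varphi(x,y,m))$ are read in $T$ or in $T^{eq}$, which one can finesse by running the whole chain inside $\mathbb{M}^{eq}$.
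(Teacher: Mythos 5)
Your proof is correct and follows essentially the same route as the paper: build an increasing chain $(M_i)_{i<\omega}$ of small submodels, each absorbing the canonical parameters $Cb(d_{p}x\varphi(x,y,m))$ of stable formulas with parameters in the previous stage, take the union, and invoke \cref{fstable} to conclude local character of $\forkindep^{f}$ over models. The only cosmetic difference is that the paper handles the $T^{eq}$ issue by adding real representatives of the canonical parameters at each stage rather than working in $\mathbb{M}^{eq}$.
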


\begin{proof}
We show that $\forkindep^{f}$ satisfies local character. Let $a$ be a finite tuple, $M$ a model, $p=\tp(a/M)$. We consider the set $C_{0}=\lbrace Cb(d_{p}x\varphi(x,y))$: $\varphi(x,y)\in \mathcal{L}$ a stable formula$\rbrace$. We take a model $M_{0}\subseteq M$ such that $\vert M_{0} \vert = \vert T \vert$ and $M_{0}$ contains a representative of every element of $C_{0}$.

\vspace{10pt}
We iterate the process taking $C_{i+1}=\lbrace Cb(d_{p}x\varphi(x,y,e))$ : $\varphi(x,y,e)\in \mathcal{L}_{e}$ a stable formula with $e\in M_{i}\rbrace$ and $M_{i}\subseteq M_{i+1}\subseteq M$ such that $\vert M_{i+1} \vert = \vert T \vert$ and $M_{i+1}$ contains a representative of every element of $C_{i+1}$. Let $M':=\bigcup_{i<\omega} M_{i}$. By \cref{fstable} $a\forkindep^{f}_{M'}M$ and $M'$ has size $\vert T \vert$.
\end{proof}

\begin{lemma}\label{kfstable} Let $\mathbb{M}$ be a monster model of a theory $T$ with weak stable Kim-forking over models. Let $M\subseteq B \subseteq \mathbb{M}$ with $M$ a model and $p:=\tp(a/B)$ a type. Then $a\forkindep^{K}_{M}B$ if for every stable formula $\varphi(x,y,m)$ with $m\in M$ the canonical parameter $Cb(d_{p}x\varphi(x,y,m))$ is in $M$.\end{lemma}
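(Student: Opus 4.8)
The plan is to mimic the $[\impliedby]$ half of the proof of \cref{fstable}, arguing by contraposition. So I would assume that $a\centernot\forkindep^{K}_{M}B$, i.e.\ that $p=\tp(a/B)$ Kim-forks over $M$, together with the hypothesis that $Cb(d_{p}x\varphi(x,y,m))\in M$ for every stable $\varphi(x,y,m)$ with $m\in M$, and aim for a contradiction.

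First I would invoke weak stable Kim-forking over models to get a stable formula $\varphi(x,y,m)\in\mathcal{L}_{m}$ with $m\in M$, and $b\in B$ with $\models\varphi(a,b,m)$, such that $\varphi(x,b,m)$ Kim-forks over $M$. The next step is to replace ``Kim-forks'' by ordinary dividing: since a Coheir-Morley sequence is in particular $M$-indiscernible, Kim-dividing implies dividing, hence Kim-forking implies forking, so $\varphi(x,b,m)$ forks over $M$; and since $M$ is a model, a single formula forking over $M$ divides over $M$ (this is exactly the step used without comment in the proof of \cref{fstable}). Thus I obtain an $M$-indiscernible sequence $(b_{i}:i<\omega)$ with $b_{0}=b$ such that $\lbrace\varphi(x,b_{i},m):i<\omega\rbrace$ is inconsistent.

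Then I would use stability of $\varphi(x,y,m)$ and the hypothesis on canonical parameters, just as in \cref{fstable}. As $\varphi(x,y,m)$ is stable, $\tp_{\varphi}(a/B)$ is definable, say by $d_{p}x\varphi(x,y,m)$; by hypothesis its canonical parameter lies in $M$, so this definition is equivalent to an $\mathcal{L}(M)$-formula $\theta(y)$. By \cref{factmodstable}, $\tp_{\varphi}(a/M)$ is definable over $M$ and its unique global non-forking extension $q$ has the same $M$-definition; since $\theta$ and this $M$-definition both define $\tp_{\varphi}(a/M)$, they agree on $M$ and hence, as $M\prec\mathbb{M}$, everywhere, so $\theta$ also defines $q$ and $\tp_{\varphi}(a/B)\subseteq q$. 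In particular $\varphi(x,b,m)\in q$, so $\models\theta(b)$; by $M$-indiscernibility of $(b_{i})$ and since $\theta$ is over $M$, $\models\theta(b_{i})$ and hence $\varphi(x,b_{i},m)\in q$ for all $i<\omega$. Thus $\lbrace\varphi(x,b_{i},m):i<\omega\rbrace\subseteq q$ is consistent, contradicting the previous paragraph.

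The one step that requires care, and which I expect to be the main obstacle, is passing from the weak-stable-Kim-forking witness (which only Kim-forks over $M$) to a formula dividing over $M$ in the ordinary sense, since it is the ordinary-dividing witnessing sequence --- an $M$-indiscernible sequence --- that the stable $\varphi$-definition over $M$ controls; this goes through ``Kim-forking $\Rightarrow$ forking $\Rightarrow$ dividing over a model'', the last implication being the same one already used in \cref{fstable}. Everything else is routine: that a canonical parameter in $M$ turns the $\varphi$-definition into an $\mathcal{L}(M)$-formula, and that this formula must then define the global non-forking $\varphi$-type, both follow from \cref{factmodstable} together with the elementary remark that two $\mathcal{L}(M)$-formulas agreeing on the model $M$ agree on $\mathbb{M}$.
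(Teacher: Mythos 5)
Your overall skeleton is the same as the paper's: invoke weak stable Kim-forking over models to get a stable witness $\varphi(x,b,m)$, observe that its $\varphi$-definition has canonical parameter in $M$, and use $M$-indiscernibility of the witnessing sequence to conclude that $\{\varphi(x,b_i,m):i<\omega\}$ lies in a (non-forking, hence consistent) extension, contradicting its inconsistency. That part is fine.

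The step you yourself flag as delicate is, however, a genuine gap as written. You pass from ``$\varphi(x,b,m)$ Kim-forks over $M$'' to ``$\varphi(x,b,m)$ forks over $M$'' (fine: Coheir-Morley sequences are $M$-indiscernible, so Kim-dividing implies dividing), and then assert that a single formula forking over a model divides over it. That last implication is \emph{not} a theorem of general first-order logic: it is the Chernikov--Kaplan theorem for NTP$_2$ theories and is known to fail in arbitrary theories. At this point of the paper $T$ is an arbitrary complete theory --- indeed \cref{kfstable} is an ingredient in the proof of \cref{nsop1stableweak}, so you cannot yet assume $T$ is NSOP$_1$, let alone NTP$_2$. (The analogous silent jump in \cref{fstable} is a blemish of the paper, not a licence.) The fix is that the detour through ordinary dividing is unnecessary: a witness to Kim-dividing of $\varphi(x,b,m)$ is by definition a Coheir-Morley sequence over $M$, which is in particular $M$-indiscernible, and $M$-indiscernibility is all your definability argument uses. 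So you should take the Kim-dividing witness $(b_i)_{i<\omega}$ directly, exactly as the paper does, and run your third paragraph on it unchanged. The one transition that remains soft --- getting from ``$\varphi(x,b,m)$ Kim-forks over $M$'' to a single Coheir-Morley sequence witnessing inconsistency of $\{\varphi(x,b_i,m)\}$, rather than only a disjunction of Kim-dividing formulas --- is made silently by the paper as well, so I won't hold it against you; but be aware it uses Kim-forking $=$ Kim-dividing over models.
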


\begin{proof}
Assume that $p$ Kim-forks over $M$ and that $M$ contains all of the canonical parameters. There is a stable formula $\varphi(x,y,m)$ with $m\in M$ and $n_{0}\in N$ such that $\varphi(x,n_{0},m) \in p$ and $\varphi(x,n_{0},m)$ Kim-forks over $M$. So there is an $M$-Morley sequence $(n_{i})_{i<\omega}$ such that $\lbrace \varphi(x,n_{i},m)$: $i<\omega \rbrace $ is inconsistent.

\vspace{10pt}
By assumption $Cb(d_{p}x\varphi(x,y,m))\in M$, so by indiscernibility $\models d_{p}x\varphi(x,n_{i},m)$ for all $i<\omega$, and $\lbrace \varphi(x,n_{i},m)$ : $i<\omega \rbrace$ is satisfied by any realization of a non-forking extension of $p$ to $N(n_{i})_{i<\omega}$, which contradicts the fact that $\varphi(x,n_{0},m)$ Kim-forks over $M$.\end{proof}

\begin{prop}\label{nsop1stableweak} Let $T$ be a theory with weak stable Kim-forking over models, then $T$ is NSOP$_1$.
\end{prop}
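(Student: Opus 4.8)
The plan is to prove NSOP$_1$ directly from its characterization recalled above, namely that Kim-independence over models is symmetric. Fix a model $M \prec \mathbb{M}$ and tuples $a, b$. Since the two roles are interchangeable, it suffices to show $a\forkindep^{K}_{M}b \Rightarrow b\forkindep^{K}_{M}a$, and I would prove the contrapositive: assuming $b\centernot\forkindep^{K}_{M}a$, I would exhibit a stable formula in $\tp(a/Mb)$ that Kim-divides over $M$, which gives $a\centernot\forkindep^{K}_{M}b$.

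First, apply weak stable Kim-forking over models to $b\centernot\forkindep^{K}_{M}a$ (with parameter set $Ma$): this yields a stable formula $\psi(y,x,m) \in \mathcal{L}_{m}$ with $m \in M$, a tuple $a'$ whose coordinates lie in $Ma$, such that $\models \psi(b,a',m)$ and $\psi(y,a',m)$ Kim-forks over $M$. Since every $M$-coheir-Morley sequence is $M$-indiscernible, Kim-dividing implies dividing, hence Kim-forking implies forking; so $\psi(y,a',m)$ forks over $M$. Because $\psi(y,x,m)$ is a stable formula, forking and dividing coincide for its instances, so $\psi(y,a',m)$ divides over $M$, and as $\psi(y,a',m) \in \tp_{\psi}(b/Ma')$ the complete $\psi$-type $\tp_{\psi}(b/Ma')$ forks over $M$.

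Next I would invoke the symmetry of $\varphi$-forking for stable formulas. The formula $\psi^{op}(x,y,m) := \psi(y,x,m)$ is again stable, and $\tp_{\psi}(b/Ma')$ forking over $M$ is equivalent to $\tp_{\psi^{op}}(a'/Mb)$ forking over $M$. Hence some finite boolean combination $\theta(x)$ of instances $\psi^{op}(x,d,m)$ with $d \in Mb$ lies in $\tp(a'/Mb)$ and forks over $M$; being a boolean combination of instances of the stable formula $\psi^{op}$, the formula $\theta$ is itself stable. For a stable formula, forking equals dividing, so $\theta(x)$ divides over $M$; and --- this is the delicate point --- a stable formula that divides over a model $M$ already Kim-divides over $M$, because its dividing is witnessed along \emph{every} $M$-indiscernible sequence in the type of its parameter, in particular along an $M$-coheir-Morley one. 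Thus $\theta(x)$ Kim-divides over $M$; since $\theta(x) \in \tp(a'/Mb)$ and the coordinates of $a'$ lie in $Ma$, the type $\tp(a/Mb)$ implies a formula that Kim-divides over $M$, so $\tp(a/Mb)$ Kim-forks over $M$, i.e.\ $a\centernot\forkindep^{K}_{M}b$. This establishes the contrapositive, hence symmetry, so $T$ is NSOP$_1$. (Alternatively one can package the argument through \cref{kfstable}, checking that whenever $a\forkindep^{K}_{M}b$ all stable canonical parameters of $\tp(b/Ma)$ lie in $M$.)

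The step I expect to be the main obstacle is the coincidence of dividing and Kim-dividing for stable formulas over a model: that a stable $\theta(x,c)$ dividing over $M$ also Kim-divides over $M$. This rests on two standard facts from the local theory of forking --- that forking and dividing agree on instances of a stable formula, and that such dividing is ``uniform'', i.e.\ detected along any $M$-indiscernible sequence in $\tp(c/M)$ (and coheir-Morley sequences over a model are such sequences) --- which can be extracted from the theory of $\varphi$-types for stable $\varphi$; see \cite{casanovas2011simple} and \cite[Chapter 2]{kim1997simple}. One should also handle with a little care the symmetry of $\varphi$-forking for a stable $\varphi$ and the passage from a forking $\psi$-formula to a forking $\psi^{op}$-formula, but since both polarities of a stable formula are stable these are routine. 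Note finally that this argument does not make $T$ simple, consistent with the intended behavior of weak stable Kim-forking over models.
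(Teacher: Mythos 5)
Your proposal takes a genuinely different route from the paper: you argue for symmetry of $\forkindep^{K}$ over models directly, whereas the paper verifies the local-character criterion of \cite{chernikov2023transitivitylownessranksnsop1} using only the easy direction of \cref{kfstable} (if all stable canonical parameters of $\tp(a/N)$ lie in $M$ then $\tp(a/N)$ does not Kim-fork over $M$) together with a counting/closing-off argument along a continuous chain of models. The paper's route is deliberately cheap: it never needs to convert a forking stable instance back into a Kim-forking one, which is exactly where your argument has to do real work.

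And that is where the gap is. The ``standard fact'' you lean on --- that dividing of a stable instance $\theta(x,c)$ over $M$ is detected along \emph{every} $M$-indiscernible sequence in $\tp(c/M)$ --- is false. Constant sequences aside, take the theory of a single equivalence relation $E$ with infinitely many infinite classes, $M$ a model, and $c$ in a class disjoint from $M$: the stable instance $E(x,c)$ divides over $M$ (witnessed by pairwise inequivalent $c_i$), yet the $M$-indiscernible sequence of pairwise distinct elements of the class of $c$ realizes $\tp(c/M)$ termwise and keeps $\{E(x,c_i)\}$ consistent. So your pivotal step --- stable plus dividing over $M$ implies Kim-dividing over $M$ --- cannot be justified by uniformity of dividing over indiscernible sequences. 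The statement itself is true, but proving it requires the definability machinery: one shows that if some $M$-coheir-Morley sequence generated by a global coheir $q\supseteq\tp(c/M)$ fails to witness inconsistency, then (using the finite/cofinite behaviour of a stable formula along an indiscernible sequence, extended past a realization) $\varphi(a,y)\in q$, whence by Harrington's symmetry lemma $\models d_{p}x\varphi(x,c)$ for $p=\tp_{\varphi}(a/M)$, so $\varphi(x,c)$ lies in a non-forking extension of a type over $M$ and does not divide. Until that argument (or a citation actually containing it) is supplied, the contrapositive chain from $b\centernot\forkindep^{K}_{M}a$ to $a\centernot\forkindep^{K}_{M}b$ does not close. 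The same machinery is also what underwrites your appeal to symmetry of local $\psi$-forking and the passage from a forking $\psi$-type to a single Kim-dividing $\psi^{op}$-formula, so I would either develop the local-stability toolkit properly or switch to the paper's local-character argument, which avoids all of this.
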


\begin{proof}
We show that $\forkindep^{K}$ satisfies the local character criterion of \cite[Proposition 2.6]{chernikov2023transitivitylownessranksnsop1} to show that $T$ is NSOP$_1$. Assume that $T$ has weak stable Kim-forking and that there exists a continuous increasing sequence of models of size $\vert T \vert$ $(M_{i})_{i<\kappa}$ for $\kappa = \vert T \vert^{+}$ and a finite tuple $a$ such that $a\centernot\forkindep^{K}_{M_{i}}M_{i+1}$ for all $i<\kappa$. Let $M:=\bigcup_{i<\kappa} M_{i}$ and $p:= \tp(a/M)$.

\vspace{10pt}
For every $i<\kappa$ the number of stable formulas with parameters in $M_{i}$ is at most $\vert T \vert$, since $\kappa > \vert T \vert$ is regular there is a smallest index $\alpha_{i}\geq i$ such that for every stable formula $\varphi(x,y,e)$ with $e\in M_{i}$, $Cb(d_{p}x\varphi(x,y,e))\in M_{\alpha_{i}}$. Now let us consider $(\beta_{i})_{i<\omega}$ defined by induction by $\beta_{0}=0$ and $\beta_{i+1}=\alpha_{\beta_{i}}$. Take $\beta = \bigcup_{i<\omega} \beta_{i}$, by continuity $\alpha_{\beta}=\beta$, so $a\forkindep^{K}_{M_{\beta}}M$ by \cref{kfstable}, which contradicts $a\centernot\forkindep^{K}_{M_{\beta}}M_{\beta+1}$.\end{proof}

\begin{cor} Let $T$ be a theory with stable Kim-forking over models, then $T$ is simple.
\end{cor}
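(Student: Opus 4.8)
The plan is to derive simplicity from the machinery already built, reducing to \cref{simpleici}. First note that a stable $\mathcal{L}$-formula $\varphi(x,y)$ is in particular a stable formula $\varphi(x,y,m)$ in which the (empty) parameter $m$ lies in every model, so stable Kim-forking over models implies weak stable Kim-forking over models; by \cref{nsop1stableweak} this already gives that $T$ is NSOP$_{1}$, and so all Kim-forking over models below has its usual good behaviour (Kim's lemma, extension, symmetry over models).

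The route I would take to upgrade NSOP$_{1}$ to simplicity is to show that, for such a $T$, forking over models coincides with Kim-forking over models. Granting this, if $a\centernot\forkindep^{f}_{M}B$ with $M\subseteq B$ and $M$ a model, then also $a\centernot\forkindep^{K}_{M}B$, so stable Kim-forking over models produces a stable $\varphi(x,y)\in\mathcal{L}$ and $b\in B$ with $\models\varphi(a,b)$ and $\varphi(x,b)$ Kim-forking over $M$; since a coheir-Morley sequence is $M$-indiscernible, $\varphi(x,b)$ also forks over $M$, and taking $m$ the empty tuple this is precisely the statement that $T$ has weak stable forking over models, whence $T$ is simple by \cref{simpleici}. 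The inclusion ``non-forking $\Rightarrow$ non-Kim-forking'' is automatic, so the content is the reverse inclusion: every forking-over-a-model instance is a Kim-forking instance. By monotonicity of forking it suffices to treat a type $p=\tp(a/N)$ over a model $N\supseteq M$; assuming $p$ forks over $M$ while $a\forkindep^{K}_{M}N$, one wants a contradiction. The natural attempt is to locate a stable formula inside $p$ witnessing that $p$ forks over $M$: being stable it then divides, Kim-divides, forks and Kim-forks over $M$ interchangeably (all four agree for stable formulas over a model), contradicting $a\forkindep^{K}_{M}N$. Equivalently one wants a converse to \cref{kfstable} — that $a\forkindep^{K}_{M}N$ forces $Cb(d_{p}x\varphi(x,y,m))\in M$ for every stable $\varphi(x,y,m)$ with $m\in M$, which follows from the stable-formula coincidence above — together with a forking-side analogue of \cref{fstable} to conclude $a\forkindep^{f}_{M}N$.

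The main obstacle is exactly producing the stable witness of forking: \cref{fstable} invoked weak stable forking at precisely this point, and no such hypothesis is at our disposal, so the argument must instead exploit what distinguishes full stable Kim-forking from the weak version, namely that the family $\{\varphi(x,y)\in\mathcal{L}:\varphi\text{ stable}\}$ is fixed and of size $\le|T|$. I would try to run the canonical-parameter/local-character argument of \cref{simpleici} and \cref{nsop1stableweak} in a single step — closing a model under the $\le|T|$ many parameters $Cb(d_{p}x\varphi(x,y))$ for $\varphi$ a stable $\mathcal{L}$-formula — and verify that the resulting $|T|$-sized submodel is a genuine \emph{forking} base for $p$, not merely a Kim-forking base; showing that this computation controls forking and not just Kim-forking is the heart of the matter. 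A fallback would be first to establish that $T$ has existence and then invoke the classical fact that a theory with stable forking is simple \cite[Proposition 1.3]{casanovas2018stable}.
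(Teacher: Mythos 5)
Your reduction to \cref{simpleici} is not carried out: the step you yourself flag as ``the heart of the matter'' --- that every instance of forking over a model is an instance of Kim-forking over that model, or equivalently that the $|T|$-sized submodel obtained by closing under the canonical parameters $Cb(d_{p}x\varphi(x,y))$ is a genuine \emph{forking} base and not merely a Kim-forking base --- is exactly where the argument has to do real work, and no argument is given. The hypothesis of stable Kim-forking over models only produces stable witnesses for types that \emph{Kim}-fork; it says nothing about a type that forks but does not Kim-fork over $M$, so there is no way to manufacture the stable formula inside $p$ that your intended contradiction requires (this is the same point at which \cref{fstable} had to assume weak stable \emph{forking} as a hypothesis). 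The fallback is equally unavailable: models are always extension bases, so ``establishing existence'' buys nothing here, and \cite[Proposition 1.3]{casanovas2018stable} concerns stable forking, so invoking it presupposes the very forking/Kim-forking identification you have not established.

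The missing idea is a different way of exploiting the fixed family of at most $|T|$ stable $\mathcal{L}$-formulas, which you correctly single out as the extra leverage of the non-weak hypothesis but then point in the wrong direction. Having obtained NSOP$_1$ from \cref{nsop1stableweak}, if $T$ were non-simple it would have TP$_2$, and from an instance of TP$_2$ one builds arbitrarily long chains of Kim-forking extensions over models (as in \cite[Proposition 3.6]{casanovas2019more}); each link of such a chain is witnessed by one of the boundedly many stable $\mathcal{L}$-formulas, so by pigeonhole, as in \cite[Proposition 1.3]{casanovas2018stable}, a single stable formula admits dividing chains of unbounded length and hence has the tree property --- impossible for a stable formula. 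This argument never compares forking with Kim-forking; your route would require proving that the two coincide over models under these hypotheses, which is essentially equivalent to the simplicity you are trying to establish.
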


\begin{proof}
By \cref{nsop1stableweak} we know that $T$ is NSOP$_1$. If $T$ is non-simple we can build an arbitrary long sequence of Kim-forking extensions with an instance of TP$_2$ (as in \cite[Proposition 3.6]{casanovas2019more}). Now we can run the same proof as in \cite[Proposition 1.3]{casanovas2018stable}: There is a stable formula with a dividing chain of infinite length and hence with the tree property, which is not possible.\end{proof}

\begin{lemma}\label{uniqueextkimf} Let $T$ be an  theory. If $p=\tp(a/M)$ is a complete type over a model $M$, $M\subseteq N$ is a model, and $\varphi(x,y,m)$ is a stable formula for some fixed $m\in M$, then there is a unique complete $\varphi$-type extending $p$ that does not Kim-fork over $M$. The unique global $\varphi(x,y,e)$-type $q_{\varphi}$ extending $p$ without Kim-forking is definable over $M$, we write $d^{K}_{p}x\varphi(x,y,e)$ its definition and $Cb(d^{K}_{p}x\varphi(x,y,e))$ the canonical parameter of this definition.
\end{lemma}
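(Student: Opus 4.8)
The plan is to read this off Fact~\ref{factmodstable} by checking that, for the stable formula $\varphi(x,y,m)$ over the model $M$, non-Kim-forking of $\varphi$-types coincides with non-forking. Write $\varphi(x,y)$ for $\varphi(x,y,m)$ (the parameter $m\in M$ being fixed) and put $p_{\varphi}:=\tp_{\varphi}(a/M)$. By Fact~\ref{factmodstable}, $p_{\varphi}$ is definable over $M$ by a formula $d_{p}x\varphi(x,y)$ with a canonical parameter over $M$, and it has a unique non-forking extension to any set containing $M$, all with the same definition over $M$; write $q_{\varphi}$ for the global one. The first point is that $q_{\varphi}$ does not Kim-fork over $M$: every $M$-coheir-Morley sequence is $M$-indiscernible, so Kim-dividing over $M$ implies dividing over $M$ and Kim-forking over $M$ implies forking over $M$, whereas $q_{\varphi}$ does not fork over $M$; by monotonicity the restriction of $q_{\varphi}$ to any $N\supseteq M$ also does not Kim-fork over $M$, and it extends $p$. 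This gives the existence part, over $N$ and globally.

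The core of the argument is the reverse implication: over $M$ and for the stable formula $\varphi$, \emph{dividing and Kim-dividing coincide at the level of $\varphi$-formulas}. For a $\varphi$-formula $\chi(x,c)$ the direction ``Kim-divides $\Rightarrow$ divides'' is immediate as above; for the converse I would use that the local $\varphi$-rank is finite (as $\varphi$ is stable), together with Kim's lemma in the simple --- indeed stable --- $\varphi$-context: if $\chi(x,c)$ divides over $M$, then $\{\chi(x,c_{i}):i<\omega\}$ is already inconsistent for \emph{every} $M$-Morley sequence $(c_{i})$ in $\tp(c/M)$, in particular for every $M$-coheir-Morley sequence, so $\chi(x,c)$ Kim-divides over $M$. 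Combining this with the standard local facts that for a stable formula forking equals dividing for $\varphi$-types and that a forking $\varphi$-type over a set containing $M$ always contains a single dividing $\varphi$-formula (again finiteness of the $\varphi$-rank), one obtains: a $\varphi$-type extending $p_{\varphi}$ Kim-forks over $M$ if and only if it forks over $M$.

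Granting this, uniqueness is immediate. If $q'$ is a complete $\varphi$-type over $N$ (resp.\ a global $\varphi$-type) extending $p$ that does not Kim-fork over $M$, then by the equivalence just stated $q'$ does not fork over $M$, so $q'$ is a non-forking extension of $p_{\varphi}$ and hence equals $\restriction{q_{\varphi}}{N}$ (resp.\ $q_{\varphi}$) by the uniqueness clause of Fact~\ref{factmodstable}. In particular $q_{\varphi}$ is definable over $M$ by $d_{p}x\varphi(x,y)$, and one sets $d^{K}_{p}x\varphi(x,y):=d_{p}x\varphi(x,y)$ and $Cb(d^{K}_{p}x\varphi(x,y)):=Cb(d_{p}x\varphi(x,y))$, matching the notation of Fact~\ref{factmodstable}.

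I expect the main obstacle to be exactly that middle step: the coincidence of dividing and Kim-dividing for $\varphi$-formulas over the model $M$. Each ingredient --- finiteness and additivity of the $\varphi$-rank, forking $=$ dividing for stable formulas, Kim's lemma --- is a standard fact of local stability theory (see e.g.\ \cite[Chapter~2]{kim1997simple} and \cite{casanovas2018stable}), but some care is needed to state them at the level of formulas and to pass back and forth between $\varphi$-types and single $\varphi$-formulas, so that the conclusion can be fed into the definition of Kim-forking; one should also verify the harmless monotonicity used above, that a restriction of a non-Kim-forking $\varphi$-type is again non-Kim-forking.
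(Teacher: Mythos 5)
Your reduction is genuinely different from the paper's argument, and the gap sits exactly where you flagged it. The paper does not go through classical local stability at all: it takes two hypothetical non-Kim-forking extensions $a',a''$ of $p$ disagreeing on $\varphi(x,b,m)$, spreads $N$ along an $M$-coheir-Morley sequence $(N_i)$, and uses the independence (amalgamation) theorem for $\forkindep^{K}$ over models --- this is where NSOP$_1$ enters essentially --- to build $(a_i,b_j)$ with $\models\varphi(a_i,b_j,m)$ iff $i<j$, contradicting stability; definability then follows because both $\lbrace b : p\cup\lbrace\varphi(x,b,m)\rbrace$ does not Kim-fork$\rbrace$ and its complement are $M$-type-definable. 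Your plan instead is to show that for $\varphi$-types over the model $M$ Kim-forking coincides with forking and then quote \cref{factmodstable}. The problem is the step you call ``Kim's lemma in the simple --- indeed stable --- $\varphi$-context'': that dividing of a $\varphi$-formula over $M$ is witnessed by every $M$-Morley (in particular every $M$-coheir-Morley) sequence. This is not a standard citable fact. Kim's lemma is a theorem about simple \emph{theories} (and, for Kim-dividing with respect to coheir sequences, its validity for arbitrary formulas is precisely the characterization of NSOP$_1$). Here the witnessing sequences are coheir-Morley sequences of a possibly non-simple ambient theory $T$; the finite $\varphi$-rank controls $\varphi$-formulas, but it says nothing a priori about how instances of $\varphi$ behave along an indiscernible sequence produced by the ambient theory's coheirs. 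So the crux of your proof is asserted, not proved.

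I do believe the missing step is true and provable, but it needs a real argument rather than a citation, e.g.: take $(c_i)$ generated by a global coheir $r\supseteq\tp(c/M)$; the $\varphi^{*}$-restriction of $r$ is $M$-invariant, hence definable over $M$; if $d\models\lbrace\chi(x,c_i):i<\omega\rbrace$, extend the sequence by realizations of $r$ over $Md$ and use eventual constancy of truth values of a stable formula along an $M$-indiscernible sequence to conclude that $\chi(d,y)\in r$; then Harrington's symmetry lemma for stable formulas over models shows $\tp_{\varphi}(d/Mc')$ is the $M$-definable nonforking extension for $c'\models r\restriction{M}{d}$, so $\chi(x,c)$ sits inside an $M$-invariant global $\varphi$-type and cannot divide over $M$. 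Assembled this way your route actually proves the lemma without any NSOP$_1$ hypothesis, which would be more general than the paper's statement --- but that is precisely why it cannot be waved through as ``standard local stability'': all of the content of the lemma has been relocated into that one unproved equivalence. You should also make explicit the secondary local facts you invoke (that a forking $\varphi$-type over $M$ contains a single \emph{dividing $\varphi$-formula}, via finiteness of the $\varphi$-rank), since the definition of Kim-forking requires you to exhibit Kim-dividing \emph{formulas}, not just an abstract forking type.
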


\begin{proof} Assume that there are $a',a''\models p$ such that $a'\forkindep^{K}_{M}N$, $a''\forkindep^{K}_{M}N$ and that for some $b\in N$, $\models \varphi(a',b,m)$ and $\models \neg \varphi(a'',b,m)$. Consider an $M$-Coheir Morley sequence $(N_{i})_{i<\omega}$ such that $N=N_{0}$. Let $(b_{i})_{i<\omega}$ be the corresponding sequence with $b_{0}=b$, and let $a'_{i},a''_{i}$ be such that $a'_{i}N_{i}\equiv_{M}a'N_{0}$ and $a''_{i}N_{i}\equiv_{M}a''N_{0}$, so $\models \varphi(a'_{i},b_{i},m)$ and $\models \neg \varphi(a''_{i},b_{i},m)$ for every $i<\omega$.

\vspace{10pt}
By applying the amalgamation theorem for Kim-forking over $M$ along the sequence $N_{i}$ we can construct a sequence $(a_{i})_{i<\omega}$ such that $\models \varphi(a_{i},b_{j},m)$ iff $i<j$, contradicting the stability of $\varphi(x,y,m)$. So $\varphi (x,b,e)\in q_{\varphi}$ iff $p\cup \lbrace \varphi(x,b,e) \rbrace$ does not Kim-fork over $M$, and the complementary of this set is $\lbrace b $ : $p\cup \lbrace \varphi(x,b,e) \rbrace$ Kim forks over $M \rbrace$. Both of these sets are $M$-type definable, so they are definable over $M$.
\end{proof}

\begin{remark} By \cref{factmodstable} we know that there is a unique non-forking extension of $p\in S_{\varphi}(M)$ to any set of parameters, and by the above result we know that there is a unique non Kim-forking extension. So these two extension coincide, and we get that the Shelah definition $d_{p}x\varphi(x,y,e)$ coincides with the definition $Cb(d^{K}_{p}x\varphi(x,y,e))$.
\end{remark}

\begin{lemma}\label{kfstablegen} Let $M\subseteq N$ be models in a theory $T$ with weak stable Kim-forking over models. Let $q=\tp(a/N)$ and $p=\tp(a/M)$. Then $a\forkindep^{K}_{M}N$ if and only if for every stable formula $\varphi(x,y,m)$ with $m\in M$ the canonical parameter $Cb(d^{K}_{q}x\varphi(x,y,m)) \in N$ is equal to $Cb(d^{K}_{q}x\varphi(x,y,m)) \in M$. These two canonical parameters are equal if $Cb(d^{K}_{q}x\varphi(x,y,m))\in M$.\end{lemma}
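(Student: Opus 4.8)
The plan is to mirror the proof of \cref{fstable}, using \cref{uniqueextkimf} in place of Shelah's theory of stable formulas (\cref{factmodstable}); the backward implication will reduce to \cref{kfstable}, while the forward implication and the final "equality" clause will need one genuine identification argument. Throughout I will use that $T$ is NSOP$_{1}$ (\cref{nsop1stableweak}), so that over the models $M\subseteq N$ the relation $\forkindep^{K}$ has the extension property — and hence, by finite character of Kim-forking, global Kim-non-forking extensions of types over a model exist — that Kim-forking of a formula over a model coincides with Kim-dividing, and consequently that every global type invariant over a model $M_{0}$ does not Kim-fork over $M_{0}$. Given a stable $\varphi(x,y,m)$ with $m\in M$, I will write $q^{N}_{\varphi}$, $q^{M}_{\varphi}$ for the unique global $\varphi$-types of \cref{uniqueextkimf} extending $\tp_{\varphi}(a/N)$, resp.\ $\tp_{\varphi}(a/M)$, and not Kim-forking over $N$, resp.\ over $M$, with definitions $d^{K}_{q}x\varphi(x,y,m)$, $d^{K}_{p}x\varphi(x,y,m)$; since $Cb(d^{K}_{q}x\varphi(x,y,m))$ always lies in $N$ and $Cb(d^{K}_{p}x\varphi(x,y,m))$ always lies in $M$, the statement amounts to: $a\forkindep^{K}_{M}N$ iff $q^{N}_{\varphi}=q^{M}_{\varphi}$ for all such $\varphi$, equivalently iff $Cb(d^{K}_{q}x\varphi(x,y,m))\in M$ for all such $\varphi$.

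For the forward implication I would assume $a\forkindep^{K}_{M}N$ and fix $\varphi(x,y,m)$ as above. First, $\tp_{\varphi}(a/N)$ does not Kim-fork over $M$, since a disjunction of Kim-dividing formulas implied by it is implied by $\tp(a/N)$. Extending $\tp(a/N)$ to a global type not Kim-forking over $M$ and passing to its $\varphi$-part gives, by the uniqueness in \cref{uniqueextkimf}, the type $q^{M}_{\varphi}$; in particular $q^{M}_{\varphi}$ restricts to $\tp_{\varphi}(a/N)$ on $N$. Being definable over $M\subseteq N$, $q^{M}_{\varphi}$ is $N$-invariant, hence does not Kim-fork over $N$, so by the uniqueness in \cref{uniqueextkimf} applied to $q$ it equals $q^{N}_{\varphi}$. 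Thus $q^{N}_{\varphi}=q^{M}_{\varphi}$ and $Cb(d^{K}_{q}x\varphi(x,y,m))=Cb(d^{K}_{p}x\varphi(x,y,m))\in M$. The same chain started from the assumption $Cb(d^{K}_{q}x\varphi(x,y,m))\in M$ gives the last sentence of the statement: then $q^{N}_{\varphi}$ is definable over $M$, hence $M$-invariant, hence does not Kim-fork over $M$, and being a global $\varphi$-type extending $\tp_{\varphi}(a/M)$ it equals $q^{M}_{\varphi}$, so the two canonical parameters coincide (the converse being immediate since $Cb(d^{K}_{p}x\varphi(x,y,m))$ is always in $M$).

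For the backward implication I would invoke \cref{kfstable}: by the remark preceding this lemma, for a complete $\varphi$-type over a model the non-forking and the non-Kim-forking extensions coincide, so the Shelah definition of $\tp_{\varphi}(a/N)$ has canonical parameter $Cb(d^{K}_{q}x\varphi(x,y,m))$; hence if these all lie in $M$, the hypothesis of \cref{kfstable} with $B=N$ is met, and $a\forkindep^{K}_{M}N$ follows. (Alternatively one can copy the proof of \cref{kfstable}: if $a\centernot\forkindep^{K}_{M}N$, weak stable Kim-forking over models gives a stable $\varphi(x,y,m)$ with $m\in M$ and $b\in N$ with $\models\varphi(a,b,m)$ and $\varphi(x,b,m)$ Kim-dividing over $M$, witnessed by an $M$-coheir-Morley sequence $(b_{i})_{i<\omega}$ with $b_{0}=b$; then $\varphi(x,b_{0},m)\in q^{N}_{\varphi}$ forces $\models d^{K}_{q}x\varphi(b_{0},m)$, and were $Cb(d^{K}_{q}x\varphi(x,y,m))\in M$, $M$-indiscernibility would give $\varphi(x,b_{i},m)\in q^{N}_{\varphi}$ for all $i$, contradicting inconsistency of $\{\varphi(x,b_{i},m):i<\omega\}$.)

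The hard part is the identification $q^{N}_{\varphi}=q^{M}_{\varphi}$ in the forward direction. In \cref{fstable} the corresponding step is free, as Shelah's theory of a stable formula already puts the definition of the unique non-forking extension of the $\varphi$-type over $M$ over $M$, and makes it equal to the $\varphi$-type over $N$ whenever $a\forkindep^{f}_{M}N$. Here \cref{uniqueextkimf} gives uniqueness and definability over the base only at the level of global types, so I need to route through the extension property for $\forkindep^{K}$ (to manufacture a global Kim-non-forking extension) and through the invariance-implies-Kim-non-forking fact, being careful that the base-monotonicity of $\forkindep^{K}$ used — a subtype of a type Kim-non-forking over $M$ is still Kim-non-forking over $M$ — is the only such direction available.
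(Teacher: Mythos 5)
Your proposal is correct in outline and its backward implication is exactly the paper's (the paper literally says the converse ``is just \cref{kfstable}'', combined with the remark identifying $d_{q}x\varphi$ with $d^{K}_{q}x\varphi$). The forward implication, however, is argued by a genuinely different route. The paper's proof is shorter and uses classical stability: since $M$ is a model, $p$ has a non-forking extension to $N$, non-forking implies non-Kim-forking, so by the uniqueness in \cref{uniqueextkimf} the $\varphi$-part of $\tp(a/N)$ must coincide with the unique non-forking extension of $p_{\varphi}$, whose definition is over $M$ by \cref{factmodstable}. You instead manufacture a global extension of $\tp(a/N)$ that does not Kim-fork over $M$ via the extension property for $\forkindep^{K}$ over models, and then use ``definable over $M$ $\Rightarrow$ $N$-invariant $\Rightarrow$ non-Kim-forking over $N$'' to identify $q^{M}_{\varphi}$ with $q^{N}_{\varphi}$. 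Both work; the paper's version buys brevity by leaning on \cref{factmodstable}, yours avoids Shelah's local stability machinery at the cost of importing the Kaplan--Ramsey extension theorem and the equality of Kim-forking with Kim-dividing over models, neither of which the paper states explicitly.

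One step is stated too loosely. The uniqueness in \cref{uniqueextkimf} is uniqueness among complete $\varphi$-types $s$ over $N$ (or the monster) such that the \emph{joint} type $q\cup s$ does not Kim-fork over the base --- equivalently, $s$ is the $\varphi$-type of some realization of $q$ Kim-independent from the larger set. You only verify that $q^{M}_{\varphi}$ \emph{alone} does not Kim-fork over $N$ (via $N$-invariance), which is not literally the hypothesis of the uniqueness statement, and base monotonicity is unavailable to upgrade ``$r$ does not Kim-fork over $M$'' to ``$r$ does not Kim-fork over $N$''. The gap is fixable with tools you already have in hand: since $q^{M}_{\varphi}$ is the $\varphi$-part of a global extension $r\supseteq q$, any finite conjunction $\theta(x,n)\wedge\psi(x,c)$ with $\theta\in q$ and $\psi(x,c)\in q^{M}_{\varphi}$ has, for every $N$-indiscernible sequence $(c_{i})$ starting at $c$, all the formulas $\theta(x,n),\psi(x,c_{i})$ inside the consistent type $r$ by $N$-invariance of $q^{M}_{\varphi}$; hence $q\cup q^{M}_{\varphi}$ does not divide, hence does not Kim-fork, over $N$, and the uniqueness then applies. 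The same repair is needed in your proof of the final clause of the statement, where you identify $q^{N}_{\varphi}$ with $q^{M}_{\varphi}$ from $M$-invariance alone.
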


\begin{proof}
$[\implies]$ Assume that $a\forkindep^{K}_{M}N$ and that $Cb(d_{p}x\varphi(x,y,e))\centernot\in M$ for some stable formula $\varphi(x,y,m)$ with $m\in M$. Let $p_{\varphi}:=\tp_{\varphi}(a/M)$. \cref{uniqueextkimf}, which we can apply by \cref{nsop1stableweak}, yields that there is a unique complete $\varphi(x,y,m)$-type $q_{\varphi}$ over $N$ such that $p\cup q_{\varphi}$ does not Kim-fork over $M$. In particular since a non-forking extension always exist we get that $p\cup q_{\varphi}$ does not fork over $M$, so $q_{\varphi}$ coincides with the unique non-forking extension of $p_{\varphi}$ to $N$, so $q_{\varphi}$ and $p_{\varphi}$ have the same definition and the same canonical parameter.

\vspace{10pt}
$[\impliedby]$ Is just \cref{kfstable}.
\end{proof}

\section{Over algebraically closed sets}

In this section we work in a monster model $\mathbb{M}$ of a theory $T$ with existence. We can define weak stable Kim-forking over algebraically closed sets in a similar fashion. We will make a small use of hyperimaginaries. As previously we will consider stable formulas $\varphi(x,y,e)$ for some fixed parameters, for convenience we will sometimes shorten to $\varphi$ and write $\tp_{\varphi}(a/A)$ instead of $\tp_{\varphi(x,y,e)}(a/A)$ for example.

\begin{definition} A theory $T$ has \emph{weak stable Kim-forking} if whenever $a\centernot\forkindep^{K}_{E}B$ for $E=acl(E)\subseteq B$ there is a stable formula $\varphi(x,y,e) \in \mathcal{L}_{e}$ (i.e. the formula with fixed $e\in E$ is stable) and $b\in B$ such that $a\models \varphi(x,b,e)$ and $\varphi(x,b,e)$ Kim-forks over $E$.
\end{definition}

\begin{remark}\label{factdef} By \cref{defalgclos} we know that if $\varphi(x,y)$ is a stable formula and $E=acl^{eq}(E) \subseteq B$ then any generalized $\varphi$-type $p_{\varphi}\in S_{\varphi^{*}}(E)$ has a unique extension $q\in S_{\varphi^{*}}(B)$ which is also definable over $E$. It is easy to check that this extension does not Kim-fork over $E$:

\vspace{10pt}
Write $p$ as $p(x,B)$. If $(B_{i})_{i<\omega}$ is an $E$-Morley sequence then by indiscernibility the unique extension $q\in S_{\varphi^{*}}((B_{i})_{i<\omega})$ which is also definable over $E$ implies $\cup_{i<\omega}p(x,B_{i})$, so this type is consistent.
\end{remark}

\begin{remark}\label{formstab} \cite[Remark 1.3]{kim2001around} If $\varphi(x,a)\equiv \psi(x,b)$ and $\psi(x,z)$ is stable, then for some $\mu(y)\in \tp(a)$, $\varphi(x,y)\wedge \mu(y)$ is stable. In particular if $\psi(x,b)$ is a generalized $\varphi$-formula for some stable formula $\varphi(x,y)$, $\psi(x,y)$ might not be stable, but adding a fragment of $\tp(b)$ will give a stable formula.
\end{remark}

\begin{definition} Let $E\subseteq \mathbb{M}$ be a tuple and $p(x)$ be a type over $E$. We call $p$ a \emph{Kim-amalgamation basis} if whenever $p_{1}$ and $p_{2}$ are non-Kim-forking extensions of $p$ over some sets of parameters $A$ and $B$ respectively, with $A,B\subseteq \mathbb{M}^{eq}$, $E\subseteq A\cap B$ and $A\forkindep^{K}_{E}B$, then the type $p_{1}\cup p_{2}$ does not Kim-fork over $E$. Any complete type over a model or a boundedly closed set is a Kim-amalgamation base: \cite[Theorem 2.5.8]{bossut2022kimforking}.\end{definition}

The proof of the following result works similarly as the one of \cref{uniqueextkimf}. Here we need $p$ to be a Kim-amalgamation basis in order to apply the independence theorem, and \cref{formstab} in order to have stable formulas.

\begin{lemma}\label{uniqueexthyp} Assume that $T$ is NSOP$_1$. If $p=\tp(a/E)$ is a Kim-amalgamation basis, $E\subseteq B$, and $\varphi(x,y,e)$ is a stable formula for some fixed $e\in E$, then there is a unique complete generalized $\varphi$-type $p_{\varphi^{*}}$ extending $p$ that does not Kim-fork over $E$ and it is definable over $E$. In particular, for every generalized $\varphi$-formula $\psi(x,b)$ with $b\in B$, exactly one of $p\cup \lbrace \psi (x,b)\rbrace$ and $p\cup \lbrace \neg \psi (x,b)\rbrace$ Kim-forks over $E$.
\end{lemma}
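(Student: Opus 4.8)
The plan is to mimic the proof of \cref{uniqueextkimf}, replacing the amalgamation theorem for Kim-forking over models by the Kim-amalgamation property of $p$, and handling the fact that generalized $\varphi$-formulas need not themselves be stable by invoking \cref{formstab}. First I would fix the type $p = \tp(a/E)$ with $p$ a Kim-amalgamation basis, a set $B \supseteq E$, and a stable formula $\varphi(x,y,e)$ with $e \in E$. Existence of a non-Kim-forking extension is routine: extend $p$ to a complete type $q$ over $B$ that does not Kim-fork over $E$ (this exists since $E$ is algebraically closed, hence an extension basis under the existence hypothesis), and restrict $q$ to the generalized $\varphi$-formulas over $B$. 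The content is in uniqueness.

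For uniqueness, suppose toward a contradiction that there are $a', a'' \models p$ with $a' \forkindep^{K}_{E} B$ and $a'' \forkindep^{K}_{E} B$, but for some generalized $\varphi$-formula $\psi(x,b)$ over $B$ we have $\models \psi(a',b)$ and $\models \neg\psi(a'',b)$. Here $\psi(x,y)$ itself may fail to be stable, so by \cref{formstab} I would pick $\mu(y) \in \tp(b)$ such that $\chi(x,y) := \psi(x,y) \wedge \mu(y)$ is stable; note $\models \chi(a',b)$ and $\models \neg\psi(a'',b) \wedge \mu(b)$, so $a'$ and $a''$ still disagree on $\chi(x,b)$ while $\chi$ is now a stable formula to which we can apply instability-from-an-alternating-array arguments. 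Then I would build an $E$-Morley sequence $(B_i)_{i<\omega}$ with $B_0 = B$ — using existence over the algebraically closed set $E$ — and corresponding copies $b_i \in B_i$ with $b_0 = b$, together with $a'_i, a''_i$ satisfying $a'_i B_i \equiv_E a' B_0$ and $a''_i B_i \equiv_E a'' B_0$, so that $\models \chi(a'_i, b_i)$ and $\models \neg\psi(a''_i, b_i) \wedge \mu(b_i)$ for all $i$. Using that $p$ is a Kim-amalgamation basis, I would amalgamate along the sequence to produce a single $(a_i)_{i<\omega}$ realizing, for each $j$, the $a'$-pattern when it needs to satisfy $\chi(x, b_j)$ and the $a''$-pattern otherwise, so that $\models \chi(a_i, b_j)$ iff $i < j$; this is an alternating array for the stable formula $\chi$, contradicting its stability.

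Having established uniqueness, definability over $E$ follows exactly as in \cref{uniqueextkimf}: the set $\{\psi(x,y)\text{-instance } \psi(x,b) : p \cup \{\psi(x,b)\}$ does not Kim-fork over $E\}$ and its complement $\{b : p \cup \{\psi(x,b)\}$ Kim-forks over $E\}$ are each $E$-type-definable, hence definable over $E$; this simultaneously gives the final clause that for each generalized $\varphi$-formula $\psi(x,b)$ with $b \in B$ exactly one of $p \cup \{\psi(x,b)\}$ and $p \cup \{\neg\psi(x,b)\}$ Kim-forks over $E$.

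The main obstacle I anticipate is the amalgamation step: the independence theorem / Kim-amalgamation in this setting is typically stated for complete types and for independent sets of parameters, so I need to be careful that the iterated amalgamation along the Morley sequence $(B_i)$ stays within the hypotheses — in particular that at each stage the relevant parameter sets are Kim-independent over $E$ (which should follow from $(B_i)$ being $E$-Morley and the transitivity/Morley-sequence properties of Kim-independence in NSOP$_1$ theories) and that the two extensions being glued genuinely do not Kim-fork over $E$. A secondary subtlety is bookkeeping with generalized $\varphi$-formulas versus honest $\varphi$-formulas, which \cref{formstab} resolves at the cost of passing to $\chi = \psi \wedge \mu$; one should check that the alternating array for $\chi$ really does contradict stability of the fixed stable formula underlying $\varphi$, but since $\chi$ is itself stable by construction this is immediate.
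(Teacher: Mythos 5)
Your proposal is correct and follows essentially the same route as the paper's proof: use \cref{formstab} to replace the generalized $\varphi$-formula $\psi(x,b)$ by the stable formula $\psi(x,y)\wedge\mu(y)$, propagate the disagreement of $a'$ and $a''$ along an $E$-Morley sequence $(b_i)_{i<\omega}$, amalgamate via the Kim-amalgamation basis hypothesis to produce an alternating array contradicting stability, and then obtain definability over $E$ from the $E$-type-definability of both the non-Kim-forking set and its complement. The only differences are cosmetic (you carry the whole sets $B_i$ rather than just the tuples $b_i$, and you spell out the existence of a non-Kim-forking extension, which the paper leaves implicit).
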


\begin{proof} Assume that there are $a',a''\models p$ such that $a'\forkindep^{K}_{E}B$, $a''\forkindep^{K}_{E}B$ and that for some generalized $\varphi$-formula $\psi(x,b)$ with $b\in B$, $\models \psi(a',b)$ and $\models \neg \psi(a'',b)$. By \cref{formstab} there is a formula $\mu(y)\in \tp(b/E)$ such that $\psi(x,y)\wedge \mu(y)$ is stable. Consider an $E$-Morley sequence $(b_{i})_{i<\omega}$ such that $b=b_{0}$. 

\vspace{10pt}
Let $a'_{i},a''_{i}$ be such that $a'_{i}b_{i}\equiv_{E}a'b_{0}$ and $a''_{i}b_{i}\equiv_{E}a''b_{0}$. Then $\models \psi(a'_{i},b_{i})\wedge \mu(b_{i})$ and $\models \neg \psi(a''_{i},b_{i})\wedge \mu(b_{i})$ for every $i<\omega$. By applying the amalgamation theorem for Kim-forking over $E$ along the sequence $b_{i}$ we can construct a sequence $(a_{i})_{i<\omega}$ such that $\models \psi(a_{i},b_{j})\wedge \mu(b_{j})$ iff $i<j$, contradicting the stability of $\psi(x,y)\wedge \mu(y)$.

\vspace{10pt}
Consider some generalized $\varphi$-formula $\psi(x,b)$ with $b\in B$. $\psi (x,b)\in p_{\varphi^{*}}$ if and only if $p\cup \lbrace \psi(x,b) \rbrace$ does not Kim-fork over $E$, and by completeness $\psi (x,b)\centernot\in p_{\varphi^{*}}$ if and only if $p\cup \lbrace \neg\psi(x,b) \rbrace$ does not Kim-fork over $E$. Both of these sets are $E$-type definable, so they are definable over $E$.
\end{proof}

Using both results of uniqueness we get that \cref{uniqueexthyp} holds more generally for generalized $\varphi$-types over algebraically closed sets in $T^{eq}$.

\begin{cor}\label{uniqueextdef} Assume that $T$ is NSOP$_1$. If $p=\tp_{\varphi}^{*}(a/E)$ with $E=acl^{eq}(E)$, $E\subseteq B$, and $\varphi(x,y,e)$ is a stable formula for some fixed $e\in E$, then there is a unique complete generalized $\varphi$-type extending $p$ that does not Kim-fork over $E$, and such a type does not Kim-fork over $E$ iff it is definable over $E$.
\end{cor}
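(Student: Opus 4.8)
The plan is to bootstrap from the two uniqueness statements already in hand: \cref{uniqueexthyp}, which gives a unique (and definable) non-Kim-forking extension for a complete type that is a Kim-amalgamation basis, and \cref{defalgclos} together with \cref{factdef}, which give a unique definable extension for a generalized $\varphi$-type over an $acl^{eq}$-closed set and tell us that this definable extension does not Kim-fork over the base.

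For existence, by \cref{defalgclos} the type $p=\tp_{\varphi}^{*}(a/E)$ has a unique extension $q\in S_{\varphi^{*}}(B)$ that is definable over $E$, and by \cref{factdef} this $q$ does not Kim-fork over $E$. For uniqueness, I would take an arbitrary complete generalized $\varphi$-type $q'$ over $B$ extending $p$ that does not Kim-fork over $E$, and fix a realization $a'\models q'$ with $a'\forkindep^{K}_{E}B$ (such a realization exists since $q'$ does not Kim-fork over $E$); note that $\tp_{\varphi}^{*}(a'/B)=q'$ and, since $q'$ extends the complete-over-$E$ type $p$, that $\tp_{\varphi}^{*}(a'/E)=p$. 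The decisive step is then to pass to the full type $r:=\tp(a'/E)$: because $E=acl^{eq}(E)$, this $r$ is a Kim-amalgamation basis, so \cref{uniqueexthyp} applies to $r$ and shows that the unique non-Kim-forking extension of $r$ to a complete generalized $\varphi$-type over $B$ — which is exactly $\tp_{\varphi}^{*}(a'/B)=q'$ — is definable over $E$. Now $q$ and $q'$ both extend $p$ and are both definable over $E$, so $q=q'$ by the uniqueness clause of \cref{defalgclos}. The remaining "iff" is then immediate: any extension of $p$ definable over $E$ coincides with $q$ by \cref{defalgclos} and hence does not Kim-fork over $E$, while any non-Kim-forking extension of $p$ coincides with $q$ by the uniqueness just established and hence is definable over $E$.

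The main obstacle is the mismatch between the hypothesis of \cref{uniqueexthyp}, which is phrased for a \emph{complete} type that is a Kim-amalgamation basis, and the datum of \cref{uniqueextdef}, which is only a generalized $\varphi$-type $p$. This is bridged by the lifting step above — replacing the candidate $\varphi$-type $q'$ by the full type $r=\tp(a'/E)$ of one of its realizations — and this is precisely where $E=acl^{eq}(E)$ is genuinely used: one needs complete types over $acl^{eq}$-closed sets (not merely over boundedly closed sets) to be Kim-amalgamation bases, which is the point at which the "small use of hyperimaginaries" announced at the start of the section enters. Once that is granted, everything else is a routine combination of \cref{uniqueexthyp}, \cref{defalgclos} and \cref{factdef}.
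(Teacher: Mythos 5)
Your existence half is fine and matches the paper: the unique $E$-definable extension from \cref{defalgclos} does not Kim-fork over $E$ by \cref{factdef}. The gap is in the uniqueness half, at exactly the step you yourself flag as decisive: you apply \cref{uniqueexthyp} to $r=\tp(a'/E)$ and justify this by asserting that complete types over $acl^{eq}$-closed sets are Kim-amalgamation bases. The paper provides no such fact, and you have the comparison backwards: the cited result (\cite[Theorem 2.5.8]{bossut2022kimforking}) gives Kim-amalgamation only for complete types over models or over \emph{boundedly closed} sets, and $acl^{eq}(E)\subseteq bdd(E)$ with the inclusion strict in general, so $\tp(a'/E)$ carries \emph{less} information than a type over $bdd(E)$ and is not known to be an amalgamation basis. (Amalgamation over $acl^{eq}$-closed sets would essentially require Lascar strong types over $E$ to be determined by $acl^{eq}(E)$, which is not available; this is precisely why the section announces a ``small use of hyperimaginaries''.) The repair is what the paper does: pass to a completion $p'\in S(bdd(E))$ of $p$ (equivalently, to $\tp(a'/bdd(E))$), which \emph{is} a Kim-amalgamation basis, apply \cref{uniqueexthyp} there, and use that a type Kim-forks over $E$ iff it Kim-forks over $bdd(E)$ to bring the uniqueness back down to $E$; the resulting unique non-Kim-forking $\varphi^{*}$-extension is then identified with the unique $E$-definable extension supplied by \cref{factdef}.

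A secondary point: even after this repair, \cref{uniqueexthyp} applied over $bdd(E)$ yields definability over $bdd(E)$, not over $E$, so your closing move --- ``$q$ and $q'$ are both definable over $E$, hence equal by \cref{defalgclos}'' --- does not go through as stated. You should instead conclude as the paper does, by observing that the $E$-definable extension of \cref{defalgclos} does not Kim-fork over $E$ (\cref{factdef}) and therefore must coincide with the unique non-Kim-forking extension obtained over $bdd(E)$; $E$-definability of the non-Kim-forking extension is then a consequence of this identification rather than an input to it.
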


\begin{proof} Fix some completion $p'\in S(bdd(E))$, by \cref{uniqueexthyp} there is a unique $q'\in S_{\varphi^{*}}(B)$ such that $p'\cup q'$ does not Kim-fork over $E$ (we recall that a type Kim-forks over $E$ if and only if it Kim-forks over $bdd(E)$). So it coincides with the unique extension of $p$ over $B$ that is definable over $E$ from \cref{factdef}.
\end{proof}
 
\begin{remark}\label{translocal} \cref{uniqueextdef} implies that in any NSOP$_1$ theory Kim-forking is transitive for complete generalized $\varphi$-type over algebraically closed sets in the imaginaries: Let $E=acl^{eq}(E)\subseteq B=acl^{eq}(B) \subseteq D$, $p=\tp_{\varphi^{*}}(a/E)$, $p'=\tp_{\varphi^{*}}(a/B)$ and $p''=\tp_{\varphi^{*}}(a/D)$. If $p''$ does not Kim-fork over $B$ and $p'$ does not Kim-fork over $E$ then $p''$ does not Kim-fork over $E$.
\end{remark}

\begin{definition}
\label{kimfstabhyp2} If $p=\tp_{\varphi^{*}}(a/E)$ for $E=acl^{eq}(E)$, $E\subseteq B$, and $\varphi(x,y,e)$ is a stable formula for some fixed $e\in E$, then the unique global $\varphi(x,y,e)$-type $q_{\varphi}$ extending $p_{\varphi}:=\tp_{\varphi_{e}}(a/E)$ without forking over $E$ is definable over $E$, we write $d^{K}_{p}x\varphi(x,y,e)$ its definition and $Cb(d^{K}_{p}x\varphi(x,y,e))$ the canonical parameter of this definition.
\end{definition}

\begin{remark}
By uniqueness we get that when both the Shelah definition $d_{p}x\varphi(x,y,e)$ and the previous one exist they coincide. By considering a non-Kim-forking extension of $\tp(a/E)$ to a model $M$ we get that the definition $d_{p}^{K}x\psi(x,y,e)$, which is a formula with parameters in $acl^{eq}(E)$, is equivalent to a positive boolean combination of formulas of the form $\psi(a',y)$ with $a'\in M$.\end{remark}

\begin{cor}\label{stkfhyp} Assume that $T$ has weak stable Kim-forking. If $p=\tp(a/B)$ with $B=acl^{eq}(B)$ and $E=acl(E)\subseteq B$, then $a\forkindep^{K}_{E}B$ if and only if $Cb(d_{p}x\varphi(x,y,e))\in acl^{eq}(E)$ for every stable formula $\varphi(x,y,e)$ with $e\in E$.
\end{cor}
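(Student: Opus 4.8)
The plan is to treat this exactly as the hyperimaginary analogue of \cref{kfstable} and \cref{kfstablegen}, replacing the model $M$ there by $E':=acl^{eq}(E)$ and using the uniqueness statement \cref{uniqueextdef} (rather than \cref{uniqueextkimf}). Three preliminary observations are needed. First, $T$ is NSOP$_1$: weak stable Kim-forking specialises to weak stable Kim-forking over models, since models are $acl$-closed, so \cref{nsop1stableweak} applies; hence \cref{uniqueextdef} is available. Second, since $B=acl^{eq}(B)$ we have $E'=acl^{eq}(E)\subseteq acl^{eq}(B)=B$, so $E'$ is a legitimate base between $E$ and $B$. Third, Kim-forking over $E$ agrees with Kim-forking over $bdd(E)$, hence with Kim-forking over $E'$; I will use this freely to pass between $E$ and $E'$. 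Finally, recall that $\tp_{\varphi}(a/B)$ is definable over $E'$ if and only if its canonical parameter $Cb(d_p x\varphi(x,y,e))$ lies in $dcl^{eq}(E')=E'$, so the statement is equivalent to: $a\forkindep^{K}_{E}B$ iff $\tp_{\varphi}(a/B)$ is definable over $acl^{eq}(E)$ for every stable $\varphi(x,y,e)$ with $e\in E$.

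For the direction $[\implies]$, assume $a\forkindep^{K}_{E}B$ and fix a stable $\varphi(x,y,e)$ with $e\in E$. The generalized $\varphi$-type $\tp_{\varphi^{*}}(a/B)$ is contained in $\tp(a/B)$, which does not Kim-fork over $E$, hence not over $E'$; so $\tp_{\varphi^{*}}(a/B)$ does not Kim-fork over $E'$ and it extends $\tp_{\varphi^{*}}(a/E')$. By \cref{uniqueextdef}, applied with base $E'=acl^{eq}(E')$ and ambient set $B$, the unique non-Kim-forking extension of $\tp_{\varphi^{*}}(a/E')$ to $B$ is definable over $E'$; therefore $\tp_{\varphi^{*}}(a/B)$ is that extension and is definable over $E'$. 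Restricting to ordinary $\varphi$-formulas gives that $\tp_{\varphi}(a/B)$ is definable over $E'$, so $Cb(d_p x\varphi(x,y,e))\in E'=acl^{eq}(E)$.

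For the converse $[\impliedby]$, assume $Cb(d_p x\varphi(x,y,e))\in acl^{eq}(E)$ for every stable $\varphi(x,y,e)$ with $e\in E$, and suppose for contradiction $a\centernot\forkindep^{K}_{E}B$. By weak stable Kim-forking there are a stable $\varphi(x,y,e)$ with $e\in E$ and some $b\in B$ with $\models\varphi(a,b,e)$ and $\varphi(x,b,e)$ Kim-forking over $E$. By hypothesis $\tp_{\varphi}(a/B)$ is definable over $E'$ with definition $\theta(y):=d_p x\varphi(x,y,e)$, an $\mathcal L(E')$-formula, and $\models\theta(b)$ since $\varphi(x,b,e)\in\tp_{\varphi}(a/B)$. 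Pick (using existence) an $E'$-Morley sequence $(b_i)_{i<\omega}$ in $\tp(b/E')$ with $b_0=b$. By \cref{defalgclos} the extension of $\tp_{\varphi}(a/E')$ to $B\cup\{b_i:i<\omega\}$ definable over $E'$ still has definition $\theta$, and since $b_i\equiv_{E'}b_0$ and $\theta$ is over $E'$ we get $\models\theta(b_i)$, i.e.\ $\varphi(x,b_i,e)$ belongs to this extension, for all $i$; as the extension is consistent, $\{\varphi(x,b_i,e):i<\omega\}$ is consistent. By Kim's lemma in NSOP$_1$ theories with existence (Kim-dividing is witnessed by any $E'$-Morley sequence), $\varphi(x,b,e)$ does not Kim-divide — hence does not Kim-fork — over $E'$, hence not over $E$, contradicting the choice of $\varphi(x,b,e)$.

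The routine parts are the $\varphi$-type/definition bookkeeping and the existence of the Morley sequence; the point requiring care is the interplay between the real base $E$ and $E'=acl^{eq}(E)$, which is what forces the use of \cref{uniqueextdef} (whose proof already absorbs the failure of $acl^{eq}(E)$ to be a Kim-amalgamation base) in place of \cref{uniqueexthyp} directly, and which relies on the invariance of Kim-forking under passage to $bdd(E)$ together with the hypothesis $B=acl^{eq}(B)$ guaranteeing $acl^{eq}(E)\subseteq B$.
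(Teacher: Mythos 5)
Your proof is correct and follows essentially the same route as the paper: the forward direction is exactly the paper's argument (apply \cref{uniqueextdef} over $acl^{eq}(E)$ and identify $\tp_{\varphi^{*}}(a/B)$ with the unique non-Kim-forking, $acl^{eq}(E)$-definable extension), and your converse is a fleshed-out version of what the paper dispatches with ``follows from a similar proof as \cref{kfstable}''. The only cosmetic difference is that in the converse you build your own $acl^{eq}(E)$-Morley sequence and invoke Kim's lemma over arbitrary sets, whereas the \cref{kfstable}-style argument works directly with a witnessing sequence; both are fine given existence and NSOP$_1$.
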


\begin{proof}$[\impliedby]$ Follows from a similar proof as the one of \cref{kfstable}.

\vspace{10pt}
$[\implies]$ Let $p':=\tp(a/acl^{eq}(E))$. Assume that $a\forkindep^{K}_{E}B$ and consider some stable formula $\varphi(x,y,e)$ with $e\in E$. By \cref{uniqueextdef} we know that there is a unique complete $\varphi(x,y,e)$-type $q_{\varphi}$ over $B$ such that $p'\cup q_{\varphi}$ does not Kim-fork over $E$, so this type coincides with the $\varphi(x,y,e)$-type of $a$ over $B$ since $a\forkindep^{K}_{E}B$, so they have the same definition and the same canonical parameter, which lies in $acl^{eq}(E)$.\end{proof}

\begin{remark}\cite[Remark 1.2]{casanovas2018stable}\label{stablealg} Let $\varphi(x,y,e)$ be a stable formula (with some parameters $e$). If $\theta(v,x,e)\vdash \exists^{=n}x\theta(v,x,e)$ then $\psi (v,y,e)=\exists x(\theta(v,x,e) \wedge\varphi(x,y,e))$ is also stable.
\end{remark}

We will use this fact to get a stable formula from another one by replacing the variables by some 'larger variables' in the sense of the algebraic closure.

\begin{remark}
\cref{stkfhyp} implies that if $T$ is an  theories with existence and weak stable Kim-forking, given some tuples $E=acl^{eq}(E)\subseteq B=acl^{eq}(B)$ and a tuple $a$ there is a smaller subset $E\subseteq B_{0}=acl^{eq}(B_{0})\subseteq B$ such that $a\forkindep^{K}_{B_{0}}B$, in the sense that any $E\subseteq B'=acl^{eq}(B')\subseteq B$ such that $a\forkindep^{K}_{B'}B$ must contain $B_{0}$.  So $T^{eq}$ has weak canonical bases in the sense of \cite[Definition 4.1]{kim2021weak}.\end{remark}

\begin{lemma}\label{trans} Assume that $T$ is NSOP$_1$ and has existence. Let $E\subseteq B \subseteq D$ be sets of parameters, $a$ be a tuple such that $a\forkindep^{K}_{B}D$, $\varphi(x,y,e)$ be a stable formula with $e\in E$ and $d\in D$ be such that $\varphi(x,d,e)$ Kim-forks over $E$ and $\models \varphi(a,d,e)$. Then there is a stable formula $\psi(x,y,e)$ and some $b\in B$ such that $\psi(x,b,e)$ Kim-forks over $E$ and $\models \psi(a,b,e)$.\end{lemma}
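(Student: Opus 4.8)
The plan is to pass to $T^{eq}$, use the transitivity result \cref{translocal} to pull the Kim-forking of $\varphi(x,d,e)$ down to a generalized $\varphi$-formula with parameter in $acl^{eq}(B)$, and then descend that parameter into $B$ via \cref{stablealg}; throughout, $e$ is kept fixed, so that every formula claimed to be stable is stable as a formula of $\mathcal{L}_{e}$. Put $E_{0}=acl^{eq}(E)$, $B_{0}=acl^{eq}(B)$ and $D_{0}=B_{0}\cup D$, so $E_{0}\subseteq B_{0}\subseteq D_{0}$ with $E_{0},B_{0}$ algebraically closed in the imaginaries and $d\in D_{0}$. Since Kim-forking over a set agrees with Kim-forking over its closure in $T^{eq}$ (cf.\ the proof of \cref{uniqueextdef}), from $a\forkindep^{K}_{B}D$ we get $a\forkindep^{K}_{B_{0}}D$, so $\tp_{\varphi^{*}}(a/D_{0})$ does not Kim-fork over $B_{0}$; but $\varphi(x,d,e)\in\tp_{\varphi^{*}}(a/D_{0})$ Kim-forks over $E$, hence over $E_{0}$, so $\tp_{\varphi^{*}}(a/D_{0})$ Kim-forks over $E_{0}$. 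By \cref{translocal} applied to $E_{0}\subseteq B_{0}\subseteq D_{0}$ this forces $\tp_{\varphi^{*}}(a/B_{0})$ to Kim-fork over $E_{0}$, and by compactness there are $b_{0}\in B_{0}$ and a generalized $\varphi$-formula $\chi(x,b_{0})$ over $B_{0}$ with $\models\chi(a,b_{0})$ and $\chi(x,b_{0})$ Kim-forking over $E$.

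A generalized $\varphi$-formula over $B_{0}$ is equivalent to a boolean combination of instances of $\varphi(x,y,e)$, which with $e$ fixed is a stable formula of $\mathcal{L}_{e}$; writing $\chi(x,b_{0})$ as $\chi_{0}(x,b_{0},e)$ with $\chi_{0}\in\mathcal{L}$, \cref{formstab} applied in $\mathcal{L}_{e}$ gives $\mu(z)\in\tp(b_{0}/e)$ such that $\Phi(x,z,e):=\chi_{0}(x,z,e)\wedge\mu(z)$ is stable in $\mathcal{L}_{e}$, with $\models\Phi(a,b_{0},e)$ and $\Phi(x,b_{0},e)\equiv\chi(x,b_{0})$ Kim-forking over $E$. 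Now $b_{0}\in acl^{eq}(B)$, so I choose a finite real tuple $\bar b\subseteq B$ containing $e$ with $b_{0}\in acl^{eq}(\bar b)$ and with as few $\bar b$-conjugates of $b_{0}$ as possible, so that this set of conjugates, say $\{c_{1}=b_{0},\dots,c_{n}\}$, coincides with the set of $B$-conjugates of $b_{0}$. Choose $\theta(v,z)\in\mathcal{L}$ with $\theta(v,z)\vdash\exists^{=n}z\,\theta(v,z)$ and with $\theta(\bar b,z)$ defining $\{c_{1},\dots,c_{n}\}$, and set $\psi(x,y,e):=\exists z\,(\theta(y,z)\wedge\Phi(x,z,e))$. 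By \cref{stablealg}, applied with the bound variable $z$ — which is algebraic over the enlarged variable $y$ — the formula $\psi(x,y,e)$ is stable in $\mathcal{L}_{e}$.

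It remains to check that $\psi(x,\bar b,e)$ works. Taking $z=b_{0}$ gives $\models\psi(a,\bar b,e)$, and since $\theta(\bar b,z)$ defines $\{c_{1},\dots,c_{n}\}$ we have $\psi(x,\bar b,e)\equiv\bigvee_{j\le n}\Phi(x,c_{j},e)$. Each $c_{j}$ is conjugate to $b_{0}$ over $B\supseteq E$, so some $\tau_{j}\in\mathrm{Aut}(\mathbb{M}/E)$ sends $b_{0}$ to $c_{j}$; as $\tau_{j}$ fixes $E$, in particular $e$, we get $\Phi(x,c_{j},e)=\tau_{j}(\Phi(x,b_{0},e))$, which Kim-forks over $\tau_{j}(E)=E$. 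A finite disjunction of formulas each Kim-forking over $E$ Kim-forks over $E$, so $\psi(x,\bar b,e)$ Kim-forks over $E$ with $\bar b\subseteq B$, as required. The delicate point is this descent: one has to arrange that the conjugates of $b_{0}$ occurring in $\psi(x,\bar b,e)$ are genuine $B$-conjugates rather than merely $\bar b$-conjugates, since only then are they automorphic images over the same base $E$ of the Kim-forking formula $\Phi(x,b_{0},e)$; and one has to keep $e$ fixed throughout, so that \cref{formstab} and \cref{stablealg} produce stable formulas of $\mathcal{L}_{e}$ in the sense required both by the hypothesis on $\varphi$ and by the conclusion.
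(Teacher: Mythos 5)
Your proof is correct and follows essentially the same route as the paper's: transitivity of Kim-forking for generalized $\varphi$-types over algebraically closed imaginary sets (\cref{translocal}) to pull the forking from $D$ down to $acl^{eq}(B)$, then \cref{formstab} to recover a stable formula and \cref{stablealg} with an algebraicity formula $\theta$ to descend the parameter into $B$. The only (minor) difference is at the last step, where the paper simply ``strengthens $\theta$'' so that all its realizations give Kim-dividing instances, while you justify this explicitly by arranging the realizations of $\theta(\bar b,\cdot)$ to be exactly the $B$-conjugates of $b_{0}$ and invoking automorphism-invariance of Kim-forking over $E$ — a more careful rendering of the same idea.
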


\begin{proof}  Let $p=\tp_{\varphi^{*}}(a/acl^{eq}(E))$, $p':=\tp_{\varphi^{*}}(a/acl^{eq}(B))$ and $p''=\tp_{\varphi^{*}}(a/D)$.

\vspace{10pt}
Then $Cb(d^{K}_{p''}x\psi(x,y,e))\in acl^{eq}(B) \setminus acl^{eq}(E)$. In fact since $\varphi(x,d,e)$ Kim-forks over $E$ the definition of a $\varphi(x,y,e)$-type containing $\varphi(x,d,e)$ cannot lie in $acl^{eq}(E)$. On the other hand $\tp_{\varphi^{*}}(a/D)$ does not Kim-forks over $B$, so by uniqueness and \cref{kimfstabhyp2} we get that the definition of $\tp_{\varphi^{*}}(a/B)$ and the one of $\tp_{\varphi^{*}}(a/D)$ coincide, so it lies in $acl^{eq}(B)$.

\vspace{10pt}
$p'$ Kim-forks over $E$ by \cref{translocal}, so there is a generalized $\varphi$-formula $\psi(x,b')$ with $b'\in acl^{eq}(B)$ such that $\models \psi(a,b')$ and $\psi(a,b')$ Kim-forks over $E$. By \cref{formstab} we can assume that $\psi(x,y)$ is stable. Take $b\in B$ such that $b'$ is algebraic over $b$, $\theta(z,y,e)$ a formula witnessing this. By considering a strengthening of $\theta$ we can assume that whenever $\models \theta(\beta',b)$  then $\varphi(x,\beta')$ Kim-divides over $E$. By \cref{stablealg} the formula $\exists y(\theta(z,y,e) \wedge \psi (x,y,e) )$ is stable, is satisfied by $a,b$ and $\exists y(\theta(b,y,e) \wedge \psi (x,y,e) )$ Kim-forks over $E$.\end{proof}

It is enough to show weak stable Kim forking for types over models:

\begin{lemma}\label{modkfstable} Let $T$ be a theory with existence such that whenever $a\centernot\forkindep^{K}_{E}N$ for $N$ a model and $E=acl(E)\subseteq N$ there is $e\in E$, $n\in N$, $\varphi(x,y,e)$ a stable formula such that $\models \varphi(a,n,e)$ and $\varphi(x,n,e)$ Kim-forks over $E$. Then $T$ has weak stable Kim-forking.
\end{lemma}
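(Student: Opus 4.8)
The plan is to reduce the statement to results already in hand by passing to a model containing $B$, applying the model‑level hypothesis there, and then transporting the witnessing stable formula back down to parameters in $B$ via \cref{trans}. First I would note that the hypothesis already forces $T$ to be NSOP$_1$: specializing it to the case where $E$ is a submodel $M$ of $N$ (so that $acl(M)=M$), it says precisely that any Kim‑forking type over a model $M$, realized in a larger model $N$, contains a stable formula with parameters in $M$ that Kim‑forks over $M$, and this is exactly the amount of weak stable Kim‑forking over models that the proof of \cref{nsop1stableweak} uses. Hence $T$ is NSOP$_1$ with existence, so Kim‑forking satisfies extension over arbitrary sets and \cref{trans} is available.

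Next I would start from $a\centernot\forkindep^{K}_{E}B$ with $E=acl(E)\subseteq B$, choose any model $N\supseteq B$, and apply extension: since $\tp(a/B)$ does not Kim‑fork over $B$, there is $a'\equiv_{B}a$ with $a'\forkindep^{K}_{B}N$. Because $\tp(a'/B)=\tp(a/B)$ Kim‑forks over $E$, so does the larger type $\tp(a'/N)$, i.e.\ $a'\centernot\forkindep^{K}_{E}N$. Now $E=acl(E)\subseteq N$ and $N$ is a model, so the hypothesis of the lemma applies to $a'$, yielding $e\in E$, $n\in N$ and a stable formula $\varphi(x,y,e)$ with $\models\varphi(a',n,e)$ and $\varphi(x,n,e)$ Kim‑forking over $E$.

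Finally I would feed this into \cref{trans} with $E\subseteq B\subseteq N$ in the role of $E\subseteq B\subseteq D$: from $a'\forkindep^{K}_{B}N$ together with the stable formula $\varphi$ and the point $n\in N$ it produces a stable formula $\psi(x,y,e)$ and some $b\in B$ with $\models\psi(a',b,e)$ and $\psi(x,b,e)$ Kim‑forking over $E$; since $b,e\in B$ and $a'\equiv_{B}a$, also $\models\psi(a,b,e)$. This is exactly a witness of weak stable Kim‑forking for the pair $E\subseteq B$. The argument contains no genuinely hard step once \cref{trans} and the extension property are in place; the point deserving the most attention is the very first one — confirming that the model‑level hypothesis stated here is enough to invoke \cref{nsop1stableweak}, and hence that $T$ is NSOP$_1$ with its attendant consequences — the remainder being the routine observations that a Kim‑forking type stays Kim‑forking after enlarging its parameter set and that replacing $a'$ by $a$ is harmless because the final formula has all of its parameters in $B$.
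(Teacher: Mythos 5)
Your proof is correct and follows essentially the same route as the paper's: extend to a model $N\supseteq B$ without (Kim-)forking over $B$, apply the model-level hypothesis there, and pull the witnessing stable formula down to $B$ via \cref{trans}, with NSOP$_1$ supplied by \cref{nsop1stableweak}. The only cosmetic difference is that the paper keeps $a$ fixed and chooses $N$ with $a\forkindep^{f}_{B}N$ (which needs only existence, since non-forking implies non-Kim-forking), whereas you replace $a$ by a $B$-conjugate $a'$ using the extension property for $\forkindep^{K}$ and transfer back at the end; your explicit check that the model-superset case of the hypothesis already suffices to run \cref{nsop1stableweak} is a welcome precision over the paper's brief remark.
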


\begin{proof}Let $a$, $E\subseteq B$ such that $a\centernot\forkindep^{K}_{E}B$. Let $N$ be a model containing $B$ such that $a\forkindep^{f}_{B}N$. So $a\centernot\forkindep^{K}_{E}N$ and by assumption there is a stable formula $\psi(x,y,e)$ such that $\psi(x,n,e)$ Kim-forks over $E$ for some $n\in N$. By \cref{trans} there is a stable formula $\varphi(x,y,e)$ and some $b\in B$ such that $\varphi(x,b,e)$ Kim-forks over $E$ and $\models \varphi(x,y,e)$. The fact that $T$ is , which we need to use \cref{trans}, is a consequence of the fact that the hypothesis of this lemma implies weak stable Kim-forking over models, which implies  by \cref{nsop1stableweak}.\end{proof}

\paragraph{Examples of theories with weak stable Kim-forking:}\hfill \break

\textbf{1 -} The first example we give here is the theory $T_{eq,P}$ of parameterized equivalence relations. This  theory has two sorts $O$ for objects and $P$ for parameters. The language is $\mathcal{L}= \lbrace =, \equiv \rbrace$, with $\equiv$ $\subseteq P\times O^{2}$. We write $x\equiv_{p}y$ to mean that $(p,x,y)\in \equiv$. $T_{eq,P}$ expresses that for any parameter $p\in P$ the relation $\equiv_{p}$ is an equivalence relation on $O$ with an infinity of classes all of which are infinite, and that for different parameters these relations interact randomly (see \cite[Section 6.3]{chernikov2016model} and \cite{bossut2023note} for more details on this theory).

\vspace{10pt}
$T_{eq,P}$ has existence, and for arbitrary sets $E\subseteq A,B$:
\begin{enumerate}
    \item $A\forkindep^{K}_{E}B$ iff $A\cap B = E$ and $O(A)\forkindep^{p}_{O(E)}O(B)$ for every $p\in P(E)$ where $\forkindep^{p}$ is forking independence in the theory $T_{\infty}$ of an equivalence relation with an infinite number of classes all infinite.
    \item $A\forkindep^{f}_{E}B$ iff $A\cap B = E$ and $O(A)\forkindep^{p}_{O(E)}O(B)$ for every $p\in P(B)$.
\end{enumerate}

The first condition is witnessed by stable formulas (namely equalities). For the second one, the difference between Kim-forking and forking here is that in the first case the parameter is inside of the basis.

\vspace{10pt}
For a given parameter $p$ the formula $\varphi(x,y) = (x\equiv_{p}y)$ is stable since it defines an equivalence relation, so Kim-forking is always witnessed by stable formulas in this theory. However if we do not fix $p$, the formula $\psi(x,yz):=(x\equiv_{z}y)$ is unstable, so forking in not always witnessed by stable formulas in this theory (which we already knew from \cref{simpleici}).

\vspace{10pt}
\textbf{2 -} The theory of vector spaces of infinite dimension over a field with weak stable Kim-forking (so in particular over a stable field) with a generic bilinear form $sT^{K}_{\infty}$ (which are studied in the Chapter 3) is an other example. A difference with the previous example is that in this case there is a non-trivial algebraic closure, and that Kim-independence is characterized only for algebraically closed structures: If $E=acl(E)\subseteq A= acl(A), B= acl(B)$ then $A\forkindep^{K}_{E}B$ if and only if $\langle A\rangle \cap \langle B \rangle = \langle E \rangle$ and $K(A)\forkindep^{K}_{K(E)}K(B)$.

\vspace{10pt}
The failure of one of these two conditions is witnessed by some stable formula, but for $sT^{K}_{\infty}$ to have weak stable Kim-forking we need this witness to exist even when $A$ and $B$ are not algebraically closed. This is what we prove now.

\vspace{10pt}
For this we use \cref{stablealg} and the fact that in an NSOP$_1$ theory with existence $A\forkindep^{K}_{E}B$ if and only if $acl(EA)\forkindep^{K}_{acl(E)}acl(EB)$ for any sets $E,A,B$.

\vspace{10pt}
Assume that $a\centernot\forkindep^{K}_{acl(E)}b$, then $acl(Ea)\centernot\forkindep^{K}_{acl(E)}acl(Eb)$, so there is a stable formula $\varphi(x,y,e)$ with $e\in acl(E)$ and $\overline{a}\in acl(Ea)$, $\overline{b}\in acl(Eb)$ such that $\models \varphi(\overline{a},\overline{b},e)$ and $\varphi(x,\overline{b},e)$ Kim-divides over $E$. So we can assume that $\overline{a}\in acl(ea)$ and $\overline{b}\in acl(eb)$. 

\vspace{10pt}
Let $\theta_{a}(\overline{x},a,e)$ be a formula witnessing the fact that $\overline{a}\in acl(Ea)$. Let $\theta_{b}(\overline{y},b,e)$ be a formula witnessing the fact that $\overline{b}\in acl(Eb)$, by strengthening $\theta_{b}$ we can assume that $\varphi(x,\overline{b}',e)$ Kim-divides over $E$ whenever $\models \theta_{b}(\overline{b}',b,e)$. For this we eventually have to enlarge the tuple $e$.

\vspace{10pt}
By $\cref{stablealg}$ the formula $\psi(x,y,e):= \theta_{a}(\overline{x},x,e)\wedge \theta_{b}(\overline{y},y,e) \wedge\varphi(\overline{x},\overline{y},e)$ is stable, by construction $\models \psi(a,b,e)$ and the formula $\psi(x,b,e)$ Kim-forks over $E$.

\vspace{10pt}
\textbf{3 -} With a similar proof we can show that if an NSOP$_1$ theory satisfies that $\forkindep^{K}=\forkindep^{a}$ over arbitrary sets then it has weakly stable Kim-forking. In fact in that case $a\centernot\forkindep^{K}_{e}b$ is witnessed by $\exists x\exists y (\varphi(x,a,e)\wedge \psi(y,b,e) \wedge x=y)$ with $\varphi(x,a,e)$ and $\psi(y,b,e)$ algebraic formulas, and the stable formula we apply \cref{stablealg} to is simply $(x=y)$. Examples of such a theories are given in the Chapter 2 of this manuscript.

\section{The question of imaginaries}

It is shown in section $2$ of \cite{casanovas2018stable} that if a theory $T$ has stable forking so does $T^{eq}$. We can ask the same question about weak stable Kim-forking. Here we adapt \cite[Proposition 2.2]{casanovas2018stable} and then discuss the reasons why we can not adapt the proof of Casanovas in its entirety. In this section we work with a theory $T$ with existence and weak stable Kim-forking.

\begin{prop} If $T$ has weak stable Kim-forking and existence, then $T^{eq}$ has weak stable Kim-forking over real parameters: Whenever $a_{F}\centernot\forkindep^{K}_{E}B$ for $E$ an algebraically closed set of real elements, $B\supseteq E$ possibly containing imaginaries and $a_{F}\in \mathbb{M}^{eq}$ there is a stable formula $\varphi(x,y,e)$ with $e\in E$ and $b\in B$ such that $a_{F}\models \varphi(x,b,e)$ and $\varphi(x,b,e)$ Kim-forks over $E$.
\end{prop}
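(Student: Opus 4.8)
The plan is to follow the strategy of \cite[Proposition 2.2]{casanovas2018stable}: reduce the problem to a statement about a stable $\varphi$-type of a \emph{real} witness over the algebraic closure of the parameters, locate the relevant canonical parameter, and then transport it back down to $a_{F}$. Throughout one uses that $T$ is NSOP$_{1}$ (by \cref{nsop1stableweak}) and has existence, so the same holds for $T^{eq}$. First I would set up the reductions. Write $a_{F}=a/F$ for a finite real tuple $a$ and a $\emptyset$-definable equivalence relation $F$. By monotonicity I may enlarge $B$ to $B_{1}:=acl^{eq}(B)$, at the cost of having to pull a parameter back into $B$ at the end, which \cref{trans} will do. Using extension for Kim-independence over $Ea_{F}$ in $\mathbb{M}^{eq}$, choose $a'$ with $a'\equiv_{Ea_{F}}a$ and $a'\forkindep^{K}_{Ea_{F}}B_{1}$; then $a'_{F}=a_{F}$, so $a'\centernot\forkindep^{K}_{E}B_{1}$ by monotonicity, while for every stable $\psi$ the $\psi$-type of $a'$ over $acl^{eq}(Ea_{F}B_{1})$ does not Kim-fork over $Ea_{F}$.

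Next I would locate a useful canonical parameter. By \cref{stkfhyp} applied to the real tuple $a'$ and $p:=\tp(a'/B_{1})$ there is a stable $\psi(u,y,e)$ with $e\in E$ such that $c:=Cb(d_{p}u\,\psi(u,y,e))\notin acl^{eq}(E)$; note $c\in B_{1}$. The key observation is that $c\in acl^{eq}(Ea_{F})$: the $\psi$-type of $a'$ over $acl^{eq}(Ea_{F}B_{1})$ does not Kim-fork over $Ea_{F}$, so by \cref{uniqueextdef} it is definable over $acl^{eq}(Ea_{F})$, and restricting that definition to the $B_{1}$-sort gives a definition of the $\psi$-part of $p$ with a parameter in $acl^{eq}(Ea_{F})$, which by uniqueness of canonical parameters equals $c$. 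Now fix $\zeta(z,x,e_{1})$ with $e_{1}\in E$ witnessing $c\in acl^{eq}(Ea_{F})$, i.e.\ $\models\zeta(c,a_{F},e_{1})$ and $\zeta(z,x,e_{1})\vdash\exists^{\le m}z\,\zeta(z,x,e_{1})$.

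The last step is to manufacture the stable formula. Let $\rho(y,z)$ be the uniform Shelah scheme of $\psi$, so that $\rho(y,z_{q})$ is the $\psi$-definition of the complete $\psi$-type $q$, with $z_{q}$ its canonical parameter. Then $\rho(y,z)$ is stable: if some $(y_{i})_{i<\omega}$, $(z_{j})_{j<\omega}$ witnessed $\models\rho(y_{i},z_{j})$ iff $i<j$, then realizing, over a model containing all the $y_{i}$, the $\psi$-types whose definitions have parameters $z_{j}$ would give $a_{j}$ with $\models\psi(a_{j},y_{i})$ iff $i<j$, contradicting stability of $\psi$. Since $c\notin acl^{eq}(E)$, \cref{uniqueextdef} produces a $\psi$-formula $\chi(u,d)$ with $d\in B_{1}$ belonging to $p$ and Kim-forking over $E$; being a boolean combination of instances of $\psi$, $\chi(u,y)$ is stable, and there is a corresponding boolean combination $R(y,z)$ of instances of $\rho$ such that $R(d',c)$ holds iff $\chi(a',d')$ holds. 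By \cref{stablealg} the formula $\varphi(x,y,e_{1}e):=\exists z\,(\zeta(z,x,e_{1})\wedge R(y,z))$ is stable; we have $\models\varphi(a_{F},d,e_{1}e)$ (take $z=c$), and $\varphi(x,d,e_{1}e)$ Kim-forks over $E$, since along an $E$-Morley sequence $(d_{i})$ a common realization of $\{\varphi(x,d_{i},e_{1}e)\}$ would, by the $\exists^{\le m}$-bound in $\zeta$ and pigeonhole, yield a single $z^{\ast}$ and an $a^{\ast}$ with $\models\chi(a^{\ast},d_{i})$ for infinitely many $i$, contradicting (via the standard reduction of Kim-forking to Kim-dividing along Morley sequences) that $\chi(u,d)$ Kim-forks over $E$. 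Finally, since $d\in B_{1}=acl^{eq}(B)$ and $a_{F}\forkindep^{K}_{B}B_{1}$ (a tuple is Kim-independent over its base from the algebraic closure of that base), \cref{trans} pulls the parameter into $B$, giving a stable $\varphi'(x,y,e')$ with $e'\in E$, $b\in B$, $\models\varphi'(a_{F},b,e')$, and $\varphi'(x,b,e')$ Kim-forking over $E$, as required.

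I expect the main obstacle to be the middle step: pinning the canonical parameter $c$ of the stable $\psi$-type of the real witness $a'$ inside $acl^{eq}(Ea_{F})$ and then extracting from it a stable formula in the variable of $a_{F}$. This is precisely the part imported from Casanovas, and it is also the step that does not survive when $E$ is allowed to be imaginary — the difficulty discussed in Section 5.
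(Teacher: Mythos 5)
Your overall strategy is the one the paper uses (and the one imported from Casanovas): pick a real representative of $a_{F}$ that is Kim-independent from the parameter set over $Ea_{F}$, locate the canonical parameter $c$ of a Kim-forking stable $\psi$-type of that representative inside $acl^{eq}(Ea_{F})$, and lift the Kim-forking $\psi$-formula to the variable of $a_{F}$ via an algebraicity formula and \cref{stablealg}. Up to and including the identification $c\in acl^{eq}(Ea_{F})$ your reductions are sound; the paper does essentially the same, except that it first passes to a \emph{saturated} model $M\supseteq B$ with $a_{F}\in dcl^{eq}(M)$ and applies \cref{trans} early to pull the witnessing parameter from $M$ into $B$, whereas you work over $B_{1}=acl^{eq}(B)$ and defer \cref{trans} to the end. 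That reordering would be harmless if the rest went through.

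The gap is in the final verification that $\varphi(x,y,e_{1}e):=\exists z\,(\zeta(z,x,e_{1})\wedge R(y,z))$ Kim-forks over $E$. Your pigeonhole correctly produces a single $z^{*}$ with $\models R(d_{i},z^{*})$ for infinitely many $i$, but the next step --- ``yield \dots an $a^{*}$ with $\models\chi(a^{*},d_{i})$'' --- does not follow. The element $z^{*}$ is constrained only to be algebraic over $x^{*}e_{1}$; nothing forces it to be the canonical parameter of a \emph{consistent} $\psi$-type, so $R(\cdot,z^{*})$ need not be the trace of any realizable type on $(d_{i})_{i}$ (for instance $z^{*}$ could make $R(\cdot,z^{*})$ hold identically, in which case your inconsistency never materializes). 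To exclude this you must put a fragment of $\tp(c/E)$ into the existential formula, and to know which finite fragment suffices you need the paper's Claim: $\tp(c/E)\cup\lbrace R(d,z)\rbrace$ Kim-forks over $E$. The proof of that Claim is exactly where the saturated model is indispensable: a conjugate $c'$ of $c$ over $E$ satisfying $R(d_{i},c')$ for all $i$ is sent back to $c$ by an automorphism, carrying $(d_{i})$ to a sequence which, by saturation, may be chosen inside $M$ --- and only for parameters of $M$ is $R(\cdot,c)$ known to compute membership in $\tp_{\psi}(a'/M)$, which is consistent because $a'$ realizes it, contradicting Kim-dividing. In your setup the defining scheme with parameter $c$ is only valid on $B_{1}=acl^{eq}(B)$, which is not saturated, so the conjugated Morley sequence cannot be relocated there and the compactness extraction of the needed fragment $\mu(z,e)$ is unavailable. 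Reinstating the saturated model, proving the Claim, and conjoining $\mu(z,e)$ to your $\varphi$ repairs the argument and recovers the paper's proof.
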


\begin{proof}
Let $a_{F},E,B$ be as in the statement. We choose a saturated model $M\supseteq B$ such that $a_{F}\in dcl^{eq}(M)$ and a representative $a$ of $a_{F}$ such that $a\forkindep^{f}_{Ea_{F}}M$ (using existence). Then $a\centernot\forkindep^{K}_{E}M$. By assumption there is a stable formula $\varphi(x,y,e)$ for $e\in E$, and a tuple $m\in M$ such that $\models \varphi(a,m,e)$ and $\varphi(x,m,e)$ Kim-forks over $E$.

\vspace{10pt}
Let $p=\tp_{\varphi^{*}}(a/M)$. By \cref{trans} there is a stable formula $\psi(x,y,e)$ and $b\in B$ such that $\models \psi(a,b,e)$ and $\psi(x,b,e)$ Kim-forks over $E$. Since $a\forkindep^{K}_{Ea_{F}}B$ the $\psi(x,y,e)$-type of $a$ over $B$ is definable over $Ea_{F}$. Let $d\in acl^{eq}(Ea_{F})$ be its canonical base and $\chi(d,y)$ be its definition, by \cref{formstab} we can assume that $\chi(w,y)$ is a stable formula. 

\vspace{10pt}
\textbf{Claim:} If $q(w):=tp(d/E)$ then $q(w)\cup \lbrace \chi (w,b)\rbrace$ Kim-forks over $E$. 
\vspace{10pt}

\textit{Proof:} Consider an $E$-Morley sequence $(b_{i})_{i<\omega}$ with $b_{0}=b$. If $d'\models q(w)\cup \lbrace \chi (w,b_{i}) $ : $i<\omega\rbrace$ by saturation there is some $(b'_{i})_{i<\omega} \in M$ such that 
$d'(b_{i})_{i<\omega}\equiv_{E} d(b'_{i})_{i<\omega}$, so $\models \chi(d,b'_{i})$ for all $i<\omega$, so $\psi(x,b'_{i},e)\in \tp_{\psi}(a/M)$, so $\lbrace \psi(x,b'_{i},e)$ : $i<\omega \rangle$ is consistent, which contradicts the fact that $\psi(x,b,e)$ Kim-divides over $E$.\hspace*{0pt}\hfill\qedsymbol{}

\vspace{10pt}
There is some $\mu(w,e) \in q(w,e)$ such that $\chi'(w,b,e):=\chi(w,b)\wedge \mu(w,e)$ Kim-forks over $E$, also $\chi'(w,y,e)$ is stable. We know that $d\in acl^{eq}(Ea_{F})$, we can assume that $d\in acl^{eq}(ea_{F})$, take $\theta(w,a_{F},e)$ some formula witnessing this, i.e. $\models \theta(d,a_{F},e)$ and $\models \exists^{=n}w\theta(w,x,z)$.

\vspace{10pt}
Let $\alpha(v,y,e):=\exists w (\theta(v,w,e)\wedge \chi(w,y,e))$. By \cref{stablealg} $\alpha(v,y,e)$ is stable, $\models\alpha(a_{F},b,e)$ and $\alpha(v,b,e)$ Kim-forks over $E$. For the last point if there were $a'_{F}\models \alpha(v,b,e)$ such that $a'\forkindep^{K}_{E}b$, then there is $d'\in acl(ea')$ such that $\models \chi'(d',b,e)$, which contradicts $d'\forkindep^{E}b$.
\end{proof}

We would like to show that $T^{eq}$ satisfies weak stable forking. Consider $e_{F}$ a single imaginary, a tuple $a$ and a model $M$ containing $e_{F}$ such that $a\centernot\forkindep^{K}_{e}M$. We want to find some stable formula $\varphi(x,y,e_{F})$ and $m\in M$ such that $\models \varphi (a,m,e_{F})$ and $\varphi(x,m,e_{F})$ Kim-forks over $e_{F}$.

\vspace{10pt}
There is some representative $e$ of $e_{F}$ such that $a\forkindep^{f}_{e_{F}}Me$, so $a\centernot\forkindep_{e}M$. By assumption we can find a stable formula $\varphi(x,y,e)$ and $m\in M$ such that $\models \varphi (a,m,e)$ and $\varphi(x,m,e)$ Kim-forks over $e$. The problem now is that there is no obvious way of preserving these conditions when reducing $e$ to $e_{F}$. The same difficulty appears if we want to deduce weak stable Kim-forking from weak stable Kim-forking over models.

\section{Some questions}

We conclude this chapter with the following questions:

\begin{enumerate}
    \item Does existence and weak stable Kim-forking over models imply weak stable Kim-forking over arbitrary sets?
     \item If $T^{eq}$ has existence and weak stable Kim-forking, does $T^{eq}$ have weak stable Kim-forking?
    \item More specifically, can we prove that if a simple theory $T$ has weak stable forking then $T^{eq}$ also have weak stable forking?
    \item Is there an  theory that does not have weak stable Kim-forking? Either over models or algebraically closed sets if it has existence.
\end{enumerate}

\bibliographystyle{plain}
\bibliography{ref.bib}

\end{document}